\def\psiV{\psi^{(V)}}
\def\Eet{E^{( \eta, \theta ) }}
\def\Ett{E^{( \theta, \theta ) }}
\def\ee{\mathbb{E}}
\def\pp{\mathbb{P}}
\def\rr{\mathbb{R}}
\def\zz{\mathbb{Z}}
\def\F{\mathcal{F}}
\def\e{\mathrm{e}}
\def\d{\mathrm{d}}
\def\del{\partial}
\def\1{\boldsymbol{1}}
\def\O{\mathcal{O}}
\def\mfe{\mathfrak{e}}
\def\Var{\mathrm{Var}}
\def\Cov{\mathrm{Cov}}
\def\mff{\mathfrak{f}}
\def\eps{\varepsilon}
\newtheorem{ccounter}{ccounter}[section]
\newtheorem{thm}[ccounter]{Theorem}
\newtheorem{lem}[ccounter]{Lemma}
\newtheorem{cor}[ccounter]{Corollary}
\newtheorem{defn}[ccounter]{Definition}
\newtheorem{prop}[ccounter]{Proposition}
\newtheorem{ass}[ccounter]{Assumption}
\newtheorem{ex}[ccounter]{Example}
\def\bet{\begin{thm}}
\def\eet{\end{thm}}
\def\bel{\begin{lem}}
\def\eel{\end{lem}}
\def\bas{\begin{ass}}
\def\eas{\end{ass}}
\def\bec{\begin{cor}}
\def\eec{\end{cor}}
\def\bed{\begin{defn}}
\def\eed{\end{defn}}
\def\bep{\begin{prop}}
\def\eep{\end{prop}}
\def\beq{\begin{equation}}
\def\eeq{\end{equation}}
\def\proof{\noindent {\bf Proof.}\ \ }
\def\bea{\begin{equation*}}
\def\eea{\end{equation*}}
\def\bex{\begin{ex}}
\def\eex{\end{ex}}
\def\remark{\noindent{\bf Remark. }}
\newtheorem{theorem}[ccounter]{Theorem}
\title{Upper tail bounds for stationary KPZ models}
\author{Benjamin Landon$^1$ \quad  Philippe Sosoe$^2$}
\date{\today}
\begin{document}

\maketitle

\begin{abstract}
    We present a proof of an upper tail bound of the correct order (up to a constant factor in the exponent) in two classes of stationary models in the KPZ universality class.
    
    The proof is based on an exponential identity due to Rains in the case of Last Passage Percolation with exponential weights, and recently re-derived by Emrah-Janjigian-Sepp\"ail\"ainen (EJS). Our proof follows very similar lines for the two classes of models we consider, using only general monotonocity and convexity properties, and can thus be expected to apply to many other stationary models.
\end{abstract}

\section{Introduction}
\let\thefootnote\relax\footnote{1. University of Toronto, \texttt{blandon@math.toronto.edu}. 2. Cornell University, \texttt{psosoe@math.cornell.edu}}In this paper, we derive estimates of the correct order, up to constant factors, for the upper tail of the distribution of the partition function in stationary integrable polymer models. Through an analogous argument, we also obtain the same result for the upper tail of a height function defined in terms of a model of Brownian motions interacting through a potential, at equilibrium. We had previously studied the fluctuations of the latter model at the level of the second moment with C. Noack in \cite{LNS}. For a specific choice of interaction potential, the height function in this model coincides in distribution with the log-partition function of the O'Connell-Yor semi-discrete polymer, but in general the model is not expected to be integrable.

Our starting point is an identity for the generating function of an off-equilibrium partition function, evaluated at a certain point. In the context of last passage percolation with exponential weights, this identity was discovered by Rains \cite{R}. It was recently re-introduced into the study of stationary models by Emrah, Janjigian and Sepp\"al\"ainen \cite{EJS}, who also gave a very simple proof that is readily adapted to other stationary models, including the ones we study here. We refer to this identity as the Rains-EJS identity. It has already found applications to optimal order bounds on the tails and central moments in exponential last passage percolation (the ``zero temperature limit'' of polymers) \cite{EGO,EJS,EJS2}.

Our contribution is to show how the Rains-EJS identity, combined with simple monotonicity properties, leads very quickly to fluctuation bounds of optimal order, even in a model where there is a priori no obvious path interpretation of the height function or second class particles, and thus the \emph{transversal fluctuation} exponent central to geometric arguments in the study of models in the Kardar-Parisi-Zhang class cannot be directly meaningfully defined.

\subsection{Main Results}
Our first result concerns integrable models of stationary directed polymers with boundaries. Four families of such models, parametrized by pairs of numbers $(\theta,\mu)$ are known: the log-gamma polymer \cite{S}, the strict-weak polymer \cite{CSS,OO}, the beta random walk \cite{BC} and the inverse beta polymer \cite{TL}. These were unified in a common framework in \cite{CN}. We recall this framework in Section \ref{sec: partition}, and refer to that section for precise definitions. In particular, Table \ref{table: integrable} summarizes the edge weight densities and stationary parameter choices for each of the four families of models.

The polymer partition function is a sum over up-right paths from $(0, 0)$ to $(m,n)$ in $\zz^2$, weighted by a random environment. The specification of the environment distribution is what leads to the four integrable models. For definiteness we assume that $m$ and $n$ depend on a single asymptotic parameter $N $, and moreover that $m, n \to \infty$ in such a way that a certain characteristic direction condition (given below as \eqref{eqn: char-direction}) is satisfied. This latter condition is the necessary condition for the leading order asymptotic flucutations of the partition function to be of KPZ type. Theorem \ref{thm:disc-main} follows directly from Theorem \ref{thm:discrete-main-tech} and Proposition \ref{prop:a-tail} appearing later in the paper.

\begin{theorem} \label{thm:disc-main}
Let $Z_{m,n}(\mu,\theta)$  denote the partition function for one of the four integrable polymer models with boundaries: log-gamma polymer, strict-weak polymer, beta random walk and inverse beta polymer. See equation \eqref{eqn: Z} for a definition. Under the characteristic direction condition \eqref{eqn: char-direction}, there are constants $C(\theta,\mu)$ and $c(\theta,\mu)$ uniformly bounded above and below in compact intervals such that  for all $0<t\le c ( \theta, \mu) N^{2/3}$, we have
\[ c( \theta, \mu ) \e^{-C(\theta,\mu)t^\frac{3}{2}} \le \mathbb{P}(\log Z_{m,n} ( \theta, \mu) \ge \mathbb{E}[\log Z_{m,n} ( \theta, \mu)]+tN^{1/3})\le C(\theta, \mu) \e^{-c(\theta,\mu)t^\frac{3}{2}}.\]
\end{theorem}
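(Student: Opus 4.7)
The plan is to derive both sides of the upper tail estimate from a single exponential identity (the Rains–EJS identity) combined with Chernoff's inequality for the upper bound and a truncation argument for the matching lower bound.

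First, in the unified framework of \cite{CN}, I would establish an identity of the form
\[
\ee\!\left[\e^{\lambda\left(\log Z_{m,n}(\theta,\mu)\,-\,\log Z_{m,n}(\theta+\lambda,\mu)\right)}\right] \,=\, M(\lambda;\theta,\mu),
\]
valid for $|\lambda|$ small, where the right-hand side is an explicit quantity of order one uniformly on compacts in $(\theta,\mu)$. This follows by exploiting the Burke-type stationarity of the boundary process, reducing the moment generating function to a product of one-dimensional gamma/beta shift factors whose behaviour is known from conjugacy.

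The upper bound on the upper tail then becomes a Chernoff optimization. For $\lambda>0$,
\[
\pp\bigl(\log Z_{m,n}(\theta,\mu) \ge \ee[\log Z_{m,n}(\theta,\mu)] + tN^{1/3}\bigr)
\le \e^{-\lambda tN^{1/3}}\,\ee\!\left[\e^{\lambda(\log Z_{m,n}(\theta,\mu)\,-\,\ee[\log Z_{m,n}(\theta,\mu)])}\right].
\]
Inserting $\pm\log Z_{m,n}(\theta+\lambda,\mu)$ inside the exponential and using the identity together with Proposition~\ref{prop:a-tail} to absorb the fluctuations of $\log Z_{m,n}(\theta+\lambda,\mu)$ about its mean, the right-hand side is bounded by $\exp\!\bigl(-\lambda tN^{1/3} + c\lambda^2 N\bigr)$. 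Here the quadratic $c\lambda^2 N$ comes from Taylor expansion of the free energy difference $\ee[\log Z_{m,n}(\theta,\mu)] - \ee[\log Z_{m,n}(\theta+\lambda,\mu)]$, whose linear coefficient vanishes exactly because of the characteristic direction condition \eqref{eqn: char-direction}. Optimizing in $\lambda\sim t^{1/2}/N^{1/3}$ yields the $\e^{-ct^{3/2}}$ bound. For the matching lower bound on the tail probability, I would use the identity in the reverse direction: the exact value of $M(\lambda)$ combined with the same quadratic free-energy expansion forces
\[
\ee\!\left[\e^{\lambda(\log Z_{m,n}(\theta,\mu)\,-\,\ee[\log Z_{m,n}(\theta,\mu)])}\right] \,\ge\, \e^{c\lambda^2 N}.
\]
Writing $X = \log Z_{m,n}(\theta,\mu) - \ee[\log Z_{m,n}(\theta,\mu)]$ and decomposing $\ee[\e^{\lambda X}] = \ee[\e^{\lambda X}\,\1_{X\ge tN^{1/3}}] + \ee[\e^{\lambda X}\,\1_{X<tN^{1/3}}]$, the second piece is at most $\e^{\lambda tN^{1/3}}$, while Cauchy–Schwarz applied to the first piece together with the identity at shift $2\lambda$ (again supplying $\ee[\e^{2\lambda X}] = O(\e^{c'\lambda^2 N})$) gives $\pp(X\ge tN^{1/3})\ge c\,\e^{-Ct^{3/2}}$ at the optimal $\lambda$.

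The main obstacle is the uniform, sharp control of the free energy expansion around the characteristic direction. Concretely, one must show that the quadratic coefficient in the expansion of $\ee[\log Z_{m,n}(\theta+\lambda,\mu)]$ is strictly negative and bounded uniformly on compacts in $(\theta,\mu)$, while the cubic $O(\lambda^3 N)$ remainder remains controllable across the whole allowed range $\lambda\le cN^{-1/3}t^{1/2}$ coming from $t\le cN^{2/3}$. These are explicit but model-specific statements about the underlying polygamma-type functions for each of the four integrable families (log-gamma, strict-weak, beta, inverse beta), and I expect the bulk of the technical work in Theorem \ref{thm:discrete-main-tech} and Proposition \ref{prop:a-tail} to lie precisely here.
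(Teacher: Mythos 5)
Your high-level architecture (Rains--EJS identity plus Chernoff for the upper bound plus a truncated Paley--Zygmund argument for the lower bound) matches the paper, but there are three concrete gaps.

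First, the identity you propose is not the Rains--EJS identity. The identity (Theorem~\ref{thm: EJS}) gives an exact expression for $\ee[\e^{\lambda \log Z^{a-\lambda,b}_{m,n}}]$, the moment generating function of a \emph{single off-stationary} partition function in which the horizontal boundary parameter has been shifted to $a-\lambda$ while the vertical is held at $b$. There is no exact identity for the difference $\log Z(\theta,\mu) - \log Z(\theta+\lambda,\mu)$ of two partition functions as you wrote, and even if one existed, the remainder $\log Z_{m,n}(\theta+\lambda,\mu)$ is either (i) a stationary model at the wrong characteristic direction or (ii) an off-stationary one, and either way controlling its fluctuations is as hard as the original problem. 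Invoking Proposition~\ref{prop:a-tail} there is circular: that proposition converts an mgf bound for the quantity of interest into a tail bound; it cannot be used to supply the mgf bound.

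Second, and more fundamentally, your sketch omits the actual probabilistic content of the proof, namely the exit-point analysis. The paper's argument writes $\log Z^{a,b}_{m,n} = \log Z^{a-\lambda,b}_{m,n} + \int_{a-\lambda}^a \del_s \log Z^{s,b}_{m,n}\,\d s$. The first term is exactly the off-stationary object that Rains--EJS controls. The correction term is, via \eqref{eqn:discrete-deriv}, an integral of quenched Gibbs expectations of $\sum_{i\leq t_1} L_i$, and the entire Sections~\ref{sec:disc-derivs} and \ref{sec:disc-exit} (monotonicity in $a$ via \eqref{eqn: monotonicity-E}, stationarity of $Q[t_1 > k]$ under shifts of $m$, and the exit-point tail bound of Theorem~\ref{thm:t1-bd-1} and Corollary~\ref{cor:disc-tail-1}) are devoted to controlling this term. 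None of this appears in your proposal. Conversely, the piece you single out as ``the main obstacle'' --- uniform control of the Taylor expansion of $\psi^{f^i}_{-1}$ --- is routine given smoothness and the sign information in Lemma~\ref{lem:cn-sign}.

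Third, the scaling in your Chernoff step is off. The correct mgf bound is $\ee[\e^{\lambda(\log Z - \ee \log Z)}] \leq C\e^{C\lambda^3 N}$, with a cubic $\lambda^3 N$, \emph{not} $\lambda^2 N$. Under the characteristic-direction condition the $\lambda^2$ coefficient is $\O(\mfe) = \O(N^{2/3})$, not $\O(N)$, and is subleading; the dominant term is the third derivative $m\psi_2^{f^1}(a) + n\psi_2^{f^2}(b) \asymp N$. Moreover, the optimization you claim is algebraically inconsistent: the minimum of $-\lambda t N^{1/3} + c\lambda^2 N$ is attained at $\lambda = tN^{-2/3}/(2c)$ and equals $-t^2 N^{-1/3}/(4c)$, a Gaussian bound, not $-ct^{3/2}$; plugging in $\lambda \sim t^{1/2}N^{-1/3}$ gives $-t^{3/2} + ctN^{1/3}$, which is positive for $t \ll N^{2/3}$. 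Only with the cubic $C\lambda^3 N$ does the optimization $\lambda \sim (t/N^{2/3})^{1/2}$ yield the $\e^{-ct^{3/2}}$ tail.
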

We remark that under the same assumptions one can find an upper bound of the same order for the lower tail: see the remark following Theorem \ref{thm:discrete-main-tech}. Such an estimate is sub-optimal as one expects a tail like $\e^{ - c t^3}$, but it does allow one to remove the $\epsilon$ loss in the bound of order $N^{\frac{p}{3}+\epsilon}$ for the central moments of order $p$ derived by Noack and the second author in \cite{NS}.

Our second result concerns the height function in a family of diffusion models studied in \cite{LNS}. To define it, we consider the following system for $N$ diffusions $ \{ u_i (t) \}_{i=1}^N \in \rr^N$,
\begin{align} \label{eqn:sys}
\d u_1 &= - V' (u_1 ) \d t  + \d B_0 - \theta \d t + \d B_1 \notag \\
\d u_j &= - V' (u_j ) \d t  + V' (u_{j-1} ) \d t + \d B_j - \d B_{j-1} , \qquad j \geq 2 .
\end{align}
Here $\theta>0$ is a parameter.
Above, the $B_j (t)$'s are all independent Brownian motions. 
We will take the potential $V : \rr \to \rr$ to be smooth. Our full assumptions (under which the above diffusions are well-defined, see \cite{LNS}) are given below in Definition \ref{def:V-def} in Section \ref{sec: diffusion}. As we also recall in Section \ref{sec: diffusion}, the system \eqref{eqn:sys} has the unique stationary distribution $\omega_\theta$  \eqref{eqn: stationary}, which is the product measure with each factor proportional to $\e^{ - V (u) - \theta u}$.

The \emph{height function} is defined by
\[W_{N,t}^\theta=\sum_{j=1}^N u_j(t)-B_0(t)+\theta t.\]
When $\{u_j(0)\}_{j=1}^N$ are in equilibrium and $V(x)=\e^{-x}$, the distribution of $W_{N,t}^\theta$ coincides with the logarithm of the partition function of the stationary O'Connell-Yor polymer, introduced in \cite{OY}. In this case, O'Connell \cite{Oquantum} has introduced a triangular array of stochastic differential equations (SDEs) for closely related objects. This system contains equations identical in form to those satisfied by $W_{j,t}^\theta$, $2\le j\le N$, as well as \emph{different} diffusion equations for the distribution of free energies of ratios of the partition functions of multipath versions of the polymer.

For general $V$, in contrast to the O'Connell-Yor case, one does not expect an integrable structure beyond the existence of the stationary measure $\omega_\theta$. The next result shows that $W_{N,t}^\theta$ nevertheless exhibits upper tail moderate deviations consistent with those of the Tracy-Widom and Baik-Rains distributions, a hallmark of the KPZ class. In addition to a characteristic direction condition (similar to the discrete polymer case), this result requires that the point $\theta$ satisfy a non-vanishing curvature condition (given as \eqref{eqn:curvature} below; it depends only on the third derivative of the Laplace transform of the measure $\e^{ - V(u)} \d u$). It is expected that this is a necessary condition for the model to lie in the KPZ universality class. The result is a consequence of Theorem \ref{thm:main-diff} below. 
\begin{theorem}
Suppose $N$, $t$ and $\theta$ satisfy the characteristic direction condition \eqref{eqn:main-diff-1} and the curvature condition \eqref{eqn:curvature}. Then there are constants $c, C$ uniformly bounded for $\theta$ in compact sets of $(0,\infty)$, such that:
\[
C^{-1} \e^{ - C s^{3/2} } \leq \pp\left[ W^\theta_{N, t} - \ee[ W^\theta_{N, t} ]  > s N^{1/3} \right] \leq c^{-1} \e^{ - c s^{3/2} }
\]
whenever $0<s\le N^{2/3}$.
\end{theorem}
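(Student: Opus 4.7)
The plan is to mimic the proof of the discrete polymer result (Theorem~\ref{thm:disc-main}), replacing the Rains-EJS identity for polymers with its analog for the diffusion system~\eqref{eqn:sys}. Since the stationary measure $\omega_\theta$ has the explicit product form with density $\propto \e^{-V(u)-\theta u}$, one expects a closed expression for an off-equilibrium Laplace transform $\mathbb{E}^{(\eta,\theta)}[\e^{\lambda W_{N,t}^\theta}]$ in terms of the log-Laplace transform $\psi^{(V)}$ of the measure $\e^{-V(u)}\,\d u$ and its derivatives, where $\eta$ is a drift perturbation of $B_0$ or an equivalent parameter shift. The characteristic direction condition~\eqref{eqn:main-diff-1} tunes $\theta$ so that the linear-in-$\lambda$ term in a Taylor expansion of this identity vanishes; the curvature condition~\eqref{eqn:curvature}, a statement about the third derivative of $\psi^{(V)}$, then guarantees that the next term is genuinely cubic, which is what will produce the $s^{3/2}$ scaling.

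For the upper bound, I would combine the Chernoff step
\[
\pp\bigl[W^\theta_{N,t}-\ee W^\theta_{N,t} > sN^{1/3}\bigr] \le \e^{-\lambda(sN^{1/3}+\ee W^\theta_{N,t})}\,\ee\bigl[\e^{\lambda W^\theta_{N,t}}\bigr]
\]
with the Rains-EJS identity. Monotonicity of the diffusion system~\eqref{eqn:sys} in its inputs allows one to pass from the equilibrium MGF to its off-equilibrium counterpart without losing the correct order, and then the right-hand side becomes a smooth function of the shifted parameter $\theta+\lambda$. Taylor expanding to third order and choosing $\lambda \sim s^{1/2}/N^{1/3}$ balances the linear term $-\lambda s N^{1/3}$ against the cubic contribution of order $\lambda^3 N$ coming from the curvature condition, yielding the bound $\e^{-cs^{3/2}}$. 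The range $s \le N^{2/3}$ is exactly the regime in which $\lambda$ stays small enough for the Taylor remainder to be under control.

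For the matching lower bound on the probability, I would tilt the stationary parameter and work under $\omega_{\theta-\lambda}$ with $\lambda \sim s^{1/2}/N^{1/3}$. Differentiating the Rains-EJS identity in $\theta$ shows that $\mathbb{E}^{(\theta-\lambda,\theta-\lambda)}[W_{N,t}^{\theta-\lambda}]$ exceeds $\ee W^\theta_{N,t}$ by an amount of order $sN^{1/3}$, while the variance of $W$ remains of order $N^{2/3}$. A Paley--Zygmund or Chebyshev argument under the tilted measure then produces, with positive probability, the event that $W$ lies in a window of width $N^{1/3}$ around this shifted mean. Transferring this event back to $\omega_\theta$ via the explicit Radon--Nikodym derivative $\e^{\lambda\sum u_j(0)}/Z$ costs a multiplicative factor that, after Cauchy--Schwarz, is of size $\e^{-Cs^{3/2}}$, matching the claimed lower bound.

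The main obstacle will be establishing the diffusion version of the Rains-EJS identity and extracting sharp quantitative control of its Taylor expansion in a model with no known integrable structure beyond the product stationary measure. There is no path picture, no second-class particle, and no obvious transversal fluctuation exponent to exploit, so all of the work must go through monotonicity of~\eqref{eqn:sys} in its parameters together with convexity of $\psi^{(V)}$ and the curvature hypothesis.
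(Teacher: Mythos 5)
The proposal misidentifies where the difficulty lies and has a real gap in the upper bound. The Rains--EJS identity for the diffusion system is not the bottleneck: the paper quotes it directly from \cite[Proposition 6.1]{LNS} as Proposition \ref{prop:generat}, and it gives an \emph{exact} formula for $\ee[\e^{(\theta-\eta) W_{N,t}(\theta,\eta)}]$, where $W_{N,t}(\theta,\eta)$ is the \emph{off-equilibrium} two-parameter height function (initial data from $\nu_\theta$, drift $\eta$). The issue is passing from this to the stationary object $W_{N,t}(\theta,\theta)$. You invoke ``monotonicity in the inputs,'' but the monotonicity (Proposition \ref{prop:diff-derivs}: $W$ increasing in the drift slot) gives $W_{N,t}(\theta,\theta) \geq W_{N,t}(\theta,\eta)$ for $\eta<\theta$, which yields a \emph{lower} bound on the MGF of the stationary object, not an upper bound. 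Your Chernoff step therefore cannot close: the inequality runs the wrong direction for the tail upper bound.

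What actually closes the upper bound in the paper is the FTC decomposition
\[
W_{N,t}(\theta,\theta) = W_{N,t}(\theta,\eta) + \int_\eta^\theta \partial_{u_2} W_{N,t}(\theta,u_2)\big|_{u_2=u}\,\d u,
\]
followed by Cauchy--Schwarz to split the moment generating function into a Rains--EJS piece and a piece controlled by $\Ett_{N,t}[(s_0)_+]$, using the derivative identity $\partial_\theta W_{N,t}(\eta,\theta)=\Eet_{N,t}[(s_0)_+]$ from \eqref{eqn:deriv-formula}. Bounding that second piece requires the annealed tail estimate for $(s_0)_+$ under the pseudo-Gibbs measure (\cite[Proposition 6.8]{LNS}, reproduced as the unnumbered proposition before Corollary \ref{cor:diff-jump}), and then Corollary \ref{cor:diff-jump}. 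In other words, the paper does manufacture an analogue of the ``exit point'' and its tail bound via the pseudo-Gibbs measure; your closing remark that ``there is no path picture\dots to exploit'' overlooks exactly the mechanism the paper built to replace it, and this mechanism is not optional.

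Two smaller points. First, the paper proves the MGF bounds only at the exact characteristic point $t_0 = N\psiV_1(\theta_0)$ and then transfers to general $t$ with $|t-t_0|\lesssim N^{2/3}$ by adding the independent Gaussian increment $B_0(t)-B_0(t_0)$; you do not address this reduction. Second, your lower bound route (tilting to $\omega_{\theta-\lambda}$ and Paley--Zygmund plus Radon--Nikodym) is genuinely different from the paper's and plausibly workable, but the paper's is more economical: the monotonicity inequality $W_{N,t}(\theta,\theta)\ge W_{N,t}(\theta,\eta)$ combined with the exact Rains--EJS formula immediately gives a \emph{lower} bound on the MGF of the same order as the upper bound, after which the generic Proposition \ref{prop:a-tail} converts the two-sided MGF bounds into two-sided tail bounds with no change of measure needed.
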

Note that this result is new even in the case $V(x)=\e^{-x}$. For this O'Connell-Yor case, Borodin-Corwin-Ferrari \cite{BCF} have shown that for (singular) initial data of so-called ``narrow wedge" type, the rescaled and centered free energy converges to the Tracy-Widom distribution $F_2$.  Semi-discrete polymers with log-gamma boundary sources were studied by Borodin-Corwin-Vet\"o \cite{BCFV} and the specific stationary case $\theta >0$ was studied by Imamura and Sasamoto \cite{IS}, where a different KPZ distribution, the Baik-Rains distribution, appears. 
Our result shows that, at least at the level of moderate deviations, the behavior of $W_{N,t}^\theta$ is stable under perturbations of the potential. 

In previous work with Noack \cite{LNS}, we had already shown that the $N^{\frac{1}{3}}$ scaling obtained by Sepp\"al\"ainen-Valko \cite{SV} (see also Moreno-Flores-Sepp\"al\"ainen-Valko \cite{MFSV}) in the O'Connell-Yor case persists also in the setting of general $V$. The conjecture that the diffusion models \eqref{eqn:sys} lie in the KPZ class for generic $V$ appears in the introduction to the monograph \cite{FSW}. One can view the models \eqref{eqn:sys} for general $V$ as (formal) scaling limits of certain discrete state-space interacting particle systems \cite{B,C}. In the case $V(x)=\e^{-x}$, this is a theorem, since the partition function of the O'Connell-Yor polymer appears as a limit of a $q$-TASEP process \cite{BC}. Although the convergence is not strong enough to directly control the moments or tails, this provides further heuristic evidence that the models \eqref{eqn:sys} belong to the KPZ class.

Recent work of Corwin and Ghosal \cite{CG} shows that for the KPZ equation, the tail behavior is ``more universal" than the asymptotic distribution itself, in the sense that the distribution can change under change of initial data, but the tail behavior remains stable.

Finally, we remark that tail estimates for the non-stationary models can be deduced from the results for the stationary ones. For example, an upper bound for the upper tail can be directly deduced from monotonicity and the Rains-EJS identity, as pointed out in \cite{EJS} for exponential last passage percolation. We give this short calculation in the O'Connell-Yor polymer case in Corollary \ref{cor:oy-non-stat}.

\subsection{Previous results}
There is a considerable literature on tail bounds for models in the KPZ class. We mention some previous results directly related to the models we study here, and direct the reader to the introduction of the recent work \cite{CH} by Corwin and Hegde for a more detailed discussion of recent developments.

In addition to the works already mentioned previously, for the (non-stationary) log-Gamma polymer, Barraquand, Corwin and Dimitrov \cite{BCD} obtain an upper bound for the upper tail in the moderate deviation regime which essentially matches the result in Theorem \ref{thm:disc-main}. Previously, Sepp\"al\"ainen and Georgiou \cite{SG} had obtained upper tail large deviations for this polymer model. Their result, like ours, uses a more probabilistic approach, relying on the stationary structure (the ``Burke property") uncovered in \cite{S}.

The method in \cite{BCD} is based on a formula from \cite{BCFV}, representing the moment generating function of the free energy as a Fredholm determinant with a kernel given by a complex but explicit contour integral involving a product of ratios of Gamma functions. The approach through Fredholm determinants has the advantage of being able to access the asymptotic distribution of the centered and rescaled free energy. On the other hand, since the Fredholm determinant is an infinite sum, it is not well suited in regimes where there is cancellation between the terms. In this case, one needs to use alternative representations for the moment generating function, such as the multiplicative identity exploited in \cite{CG} to obtain estimates for the lower tail of the solution of the KPZ equation. An approach to computing the moment generating function via the Riemann-Hilbert method was presented in \cite{CC}.

The method presented here not only gives upper and lower bounds, but also an upper bound for the lower tail of the free energy on the $N^{1/3}$ scale, albeit with a suboptimal $s^{3/2}$ exponent. See our recent paper  \cite{LS}, where this initial bound is used as an input to derive bounds with the correct $s^3$ exponent for the lower tail of the free energy of the O'Connell-Yor polymer. In this vein, we again mention the recent work \cite{CH} where an upper bound for the lower tail is obtained for the more intricate $q$-pushTASEP model.

After posting our result, we were made aware of new results of Janjigian, Emrah and Xie which overlap with our first Theorem. These authors obtain upper tail moderate deviations for three of the four integrable polymer models covered by Theorem \ref{thm:disc-main}. The results form part of Xie's thesis \cite{X} and will be reported in an upcoming paper by the authors. We note that these authors also obtain explicit constants in front of the $s^{3/2}$ tail exponent.

For last passage percolation, several moderate and large deviation results are available. Before the results \cite{EGO,EJS,EJS2} cited above, which rely on a probabilistic coupling approach as we do, several works deal with tail bounds for the passage time in the moderate deviation regime, and obtain results using integrable probability: \cite{BDMMZ,J,S2}. See also \cite{BG, BGK} for recent, refined large deviation results for the passage time, or equivalently, the Laguerre Unitary Ensemble. 

We also mention the papers \cite{CG,CG2,DT,Ts}, which deal with tails bounds for the continuum KPZ equation. We stress that this list is necessarily non-exhaustive, given the intense interest the question of tail bounds for these models has generated.

\subsection{Organization of paper}

The rest of the paper is organized into two larger parts: the first dealing with the discrete polymer models in Sections \ref{sec:disc-main}--\ref{sec:disc-tail}; and the second handling the diffusion system in Sections \ref{sec: diffusion}--\ref{sec:diff-tail}.

In Section \ref{sec:disc-main} we formally introduce the discrete polymer models via the Mellin transform framework, as well as state and prove the Rains-EJS identity for this model in Theorem \ref{thm: EJS}. In Section \ref{sec:disc-derivs} we derive some identities and properties of the derivatives of the log partition function wrt the boundary parameters. In Section \ref{sec:disc-exit} we derive an annealed tail bound for the exit point of the polymer path from the horizontal boundary (the exit point from the vertical boundary can be treated by identical arguments). Finally, our main results on tail estimates for the discrete polymer case are derived in Section \ref{sec:disc-tail}.

In Section \ref{sec: diffusion} we formally define the systems of diffusions we study as well as the assumptions on the potential $V$. In Section \ref{sec:diff-prelim} we state some preliminaries for the diffusions that we will require for our proofs. For example, we introduce a two-parameter model and state monotonicity properties wrt the boundary parameters as well as the Rains-EJS identity for this model. We also recall the definition of the pseudo-Gibbs measure first defined in \cite{LNS}, as well as an exit point bound under the annealed version of this measure. The bulk of the properties that we state in this section were proven already in \cite{LNS}. Finally we deduce our main results in Section \ref{sec:diff-tail} in an analogous argument to the discrete polymer case.

\paragraph{Acknowledgements.} The work of B.L. is supported by an NSERC Discovery grant. B.L. thanks Amol Aggarwal and Duncan Dauvergne for helpful and illuminating discussions. The work of P.S. is partially supported by NSF grants DMS-1811093 and DMS-2154090.

\paragraph{Competing Interests.} All authors certify that they have no affiliations with or involvement in any organization or entity with any financial interest or non-financial interest in the subject matter or materials discussed in this manuscript.

\section{Discrete models} \label{sec:disc-main}
We first consider four polymer models which were previously shown to be integrable: the log-Gamma polymer, the beta random walk, the strict-weak polymer and the inverse beta polymer of Thiery and Le Doussal. A common framework (the \emph{Mellin transform}) for the stationary version of these models was introduced by Chaumont and Noack \cite{CN}. We recall their setup here, and collect a few basic facts about the models which we use in our proofs.

\subsection{Partition function and stationarity}\label{sec: partition}
For $m,n\in \mathbb{Z}_+ :=\{ k \in \zz : k \geq 0 \}$ we let $\Pi_{m,n}$ be the set of up-right paths from $(0,0)\in \mathbb{Z}^2$ to $(m,n)$.  The environment of a general discrete polymer is a collection of weights $\{ \omega_e \}_e$ parameterized by the edges of $\mathbb{Z}_+^2$. The distribution of the weights for the polymers we consider will be specified below. Given the environment, the partition function is given by
\begin{equation}\label{eqn: Z}
Z_{m,n} := \sum_{x_\cdot \in \Pi_{m,n}} \prod_{i=1}^{m+n}
\omega_{e_i(x_\cdot)},
\end{equation}
where the $e_i (x_\cdot)$ denote the edges of the path $x_\cdot \in \Pi_{m, n}$, i.e.,  $e_i(x_\cdot):=x_i-x_{i-1}$.

We will make use of the following increments: for $k\in \{1,2\}$
\[R^k_x:=\frac{Z_x}{Z_{x-\alpha_k}}\qquad \text{for all  }x\text{ such that }  x-\alpha_k\in\mathbb{Z}_+^2.\]
where $\alpha_1=(1, 0)$, and $\alpha_2 = (0, 1)$ corresponding to horizontal or vertical increments, respectively. By definition $Z_{0,0} =1$.

For $x$ on the boundary of the quadrant $x \in \del \zz_+^2 := \{ (i, 0) : i \geq 0 \} \cup \{ (0, i ) : i \geq 0 \}$, the $R^k_x$ (where defined) coincide with the weight of the edge linking $x-\alpha_k$ to $x$. These boundary edge weights will later be taken to be iid, and so for example 
\[Z_{0,N}=\prod_{k=1}^N R_{k,0}^1 ,\]
is a product of iid random variables.

\paragraph{Distributional structure.}  The edge weights of the four integrable polymer models have a shared general distributional structure which we now detail. This structure depends on the specification of the distribution of three independent random variables $(R_1, R_2, X)$ which are different for each of the four models; the exact choices will be specified below. Given these distributions, we first state how the edge weights are generated. 

First, the weights along the  boundary of the quadrant differ from those in the bulk. The edge weights on the horizontal and vertical boundary edges are given by two independent families of iid random variables that are copies of the two random variables $R^1$ and $R^2$, respectively. 
 For each interior vertex $x \in \zz_+^2 \backslash \del \zz_+^2$ the weights of the incoming edges $(x-\alpha_1, x)$ and $(x-\alpha_2, x)$ 
are iid copies of $(Y_1, Y_2) \in \rr^2$ (we clarify that $(Y_1, Y_2)$ do  not necessarily have independent components - there is only independence between incoming edge weights of different interior vertices). The distribution of $(Y_1, Y_2)$ is  specified by the third random variable $X$, in a way that differs for each of the four models. The dependence of $(Y_1, Y_2)$ on $X$ appears in the last column Table \ref{table: integrable} below. 

In order to specify the distribution of $(R^1, R^2, X)$, we will use the \emph{Mellin transform} framework of \cite{CN}.

For any non-negative $f$ supported on $(0, \infty)$ we introduce the density form
\begin{equation}\label{eqn: mellin}
\rho_{f,a}(x):= M_f(a)^{-1}x^{a-1}f(x) ,
\end{equation}
where $M_f (a)$ is the normalization constant turning this into a probability measure on $(0, \infty)$.  The parameter $a$ will be taken to lie in $a \in D (M_f ) := \mathrm{interior} \{ a : 0 < M_f (a) < \infty\}$.

For $Z$ a random variable, we write $Z\sim \rho_{f,a}(x)\,\mathrm{d}x$, if $Z$ is distributed according to the density \eqref{eqn: mellin}. In this case, $\log Z$ has density
\[\e^{ax}f(\e^x)\,\frac{\mathrm{d}x}{M_f(a)}\]
on $\mathbb{R}$. 

In the case of the four integrable polymers, $f$ will be one of the functions
\begin{align*}
    \e^{-\beta/x}, \quad \e^{-\beta x}, \quad  (1-x)^{\beta-1}1_{\{0<x<1\}}, \quad \left(1-\frac{1}{x}\right)^{\mu-1}1_{\{x>1\}} ,  \quad  \left(\frac{x}{x+1}\right)^{(\beta+\mu)}.
\end{align*}
For these five functions $D(M_f)$ is $(0, \infty)$, $(-\infty, 0)$, $(0, \infty$), $(-\infty, 0)$ and $(-\beta-\mu, 0)$, respectively. 

Each of the four integrable polymer models is specified by the choice of two densities $f^1, f^2$ and three parameters $\theta, \mu, \beta$, with the latter two parameters appearing in the definition of the functions $f^i$.  Given these choices, the random variables $(R^1, R^2, X)$ are given by,
\beq \label{eqn:edge-weights}
(R^1, R^2,X)\sim m_{f^1}(a_1)\otimes m_{f^2}(a_2)\otimes m_{f^1}(a_3).
\eeq
The choices of $f^i$ and $a_i$ (the latter constants being specified in terms of $\theta$ and $\mu$), as well as $(Y^1, Y^2)$ in terms of $X$, leading to the four integrable stationary models are specified in Table \ref{table: integrable}.  Note that these choices specify the environment distribution completely.
\begin{table}[]
\centering
\begin{tabular}{l || c | c | c | c }
Model & $f^1$ &   $f^2$ & $(a_1, a_2, a_3) $ & $(Y^1,Y^2)$  \\
\hline
Inv-Gamma & $\e^{-\beta / x}$ & $\e^{-\beta/x}$ &  $(\theta - \mu, -\theta, -\mu)$ &$(X, X)$ \\
Gamma &$ \e^{-\beta x}$& $\left( 1 - \frac{1}{x} \right)^{\mu-1} \1_{ \{ x > 1 \} }$&$(\mu+\theta , - \theta, \mu )$ &  $(X, 1)$\\
Beta & $(1-x)^{\beta-1} \1_{ \{ 0 < x < 1 \} }$& $\left(1 - \frac{1}{x} \right)^{\mu-1} \1_{ \{ x > 1 \} }$  &$(\mu+\theta, - \theta, \mu)$ &  $(X, 1-X)$ \\
Inv-Beta & $ \left( 1 - \frac{1}{x} \right)^{\beta-1 } \1_{ \{ x > 1 \} }$ & $ \left( \frac{x}{x+1} \right)^{\beta+\mu}$ &$(\theta - \mu, -\theta, -\mu)$ & $(X, X-1)$ 
\end{tabular}
    \caption{Choices of $f^i$ and $a_i$ for the four integrable polymers in the Mellin transform framework, and interior edge weight distribution. The range of $\theta$ for the four models is $(0, \mu)$, $(0, \infty)$, $(0, \infty)$ and $(0, \mu)$, respectively.}
    \label{table: integrable}
\end{table}

Because we will need to vary certain parameters of our models, we recast the above discussion and formally specify our models as follows. First, fix $f^1, f^2$ and $\mu, \beta >0$ to be one of the four choices specified by Table \ref{table: integrable}. Then, let $a_3 = \pm \mu$ as appropriate. For any $(a, b) \in D(M_{f^1} ) \times D(M_{f^2} )$ we can define the edge weights according to \eqref{eqn:edge-weights} and $(a_1, a_2) = (a, b)$ and the interior edge weights in terms of $X$ according to Table \ref{table: integrable}. Then, the partition function with this environment will be denoted by $Z^{a,b}_{m,n}$.  The stationary case is when $a+b = a_3$ (this final choice being the degree of freedom afforded by the parameter $\theta$). In particular, we will view $f^1, f^2, \beta, \mu$ (and consequently $a_3$) as  fixed for the remainder of the paper. Throughout, we will need to vary $a$ and $b$ and so we leave them as free parameters in the model. We will always assume that $(a, b) \in D(M_{f^1} ) \times D(M_{f^2} )$; in general, constants degenerate as we approach the boundary of this domain but are otherwise uniform over compact subsets of the domain. 

We have the following decomposition of the log-partition function (here WNSE refers to the compass directions west, north, south, east),
\begin{equation}
\log Z^{a,b}_{m,n}=W_{n,m}+N_{n,m}=S_{n,m}+E_{n,m}\label{eqn: NSEW}
\end{equation}
where
\begin{equation} \label{eqn NSEW}
W_{n,m}:= \sum_{j=1}^n \log R^2_{0,j},\quad E_{n,m}:=\sum_{j=1}^n \log R^2_{m,j},\quad  N_{n,m}:=\sum_{i=1}^m \log R^1_{i,n}, \quad S_{n,m}:=\sum_{i=1}^m \log R^1_{i,0}.    
\end{equation}
The \emph{down-right property} (see Proposition 2.3 of \cite{CN}) implies that when $a+b = a_3$,
\begin{equation}
    \label{eqn: Burke}
    \text{ each of the sums } W_{n,m},\, N_{n,m},\, S_{n,m},\, E_{n,m} \text{ is a sum of i.i.d. random variables},
\end{equation}
and moreover $E_{n,m} \stackrel{d}{=} W_{n,m}$, and $N_{n, m} \stackrel{d}{=} S_{n,m}$ with the distribution of $W_{n,m}$ and $S_{n,m}$ being independent of $m$ and $n$ respectively.

\paragraph{The functions $M_f(a)$.}
From \eqref{eqn: mellin}, we have the form
\[M_f(a):= \int_0^\infty x^{a-1}f(x)\,\mathrm{d}x.\]
Setting
\[x=\e^{y},\]
we have
\[M_f(a)=\int_{\mathbb{R}}\e^{a y} f(\e^{y})\,\mathrm{d}y.\]
If $X\sim \rho_{f,a}(x)\d x$, we also have the formula
\begin{equation}\label{eqn: bob}\frac{\partial^k}{\partial a^k}M_f(a)=M_f(a)\mathbb{E}[(\log X)^k].
\end{equation}

The logarithmic derivatives
\[\psi_n^f(a)=\frac{\partial^{n+1}}{\partial a^{n+1}}\log M_f(a)\]
play an important role in our argument. For notational convenience, we also denote
\[\psi_{-1}^f(a):=\log M_f(a).\]
The following two identities  have important consequences
\begin{align*}
    \mathbb{E}[\log X]&=\psi_0^f(a),\\
    \mathrm{Var}(\log X)&=\psi_1^f(a).
\end{align*}
In particular, when $a+b = a_3$ we have $\ee[ \log Z^{a,b}_{m,n} ] = m \psi^{f^1}_0 (b) + n \psi^{f^2}_0 (a)$.

\paragraph{Coupling.} We will need to compare the models with varying parameters $(a, b)$. We now specify a coupling that will facilitate our analysis. Introduce,
\[
F_i (a, x) := \frac{1}{ M_f (a)} \int_0^x y^{a-1} f^i (y) \d y
\]
for $i=1, 2$ and let $H_i (a, \cdot)$ be the inverse of $F^i (a, \cdot)$ defined on $(0, 1)$.  For an infinite family of iid uniform $(0, 1)$ random variables $\{ U_i \}_{i \in \mathbb{Z}}$ we set for $j \geq1$,
\[
R^2_{0, j} := H_1 (b, U_{-j} ), \qquad R^1_{j, 0} := H_2 (a, U_{j} ) .
\]
When necessary we will denote the dependence of these random variables on the parameter $a, b$ by $R^2_{0, j} (b), R^1_{j, 0} (a)$.  The first logarithmic derivative of $H_i$ is given by $\del_a \log H_i (a, x) = L^i (a, H^i (a, x) )$ where 
\beq \label{eqn:weight-monotonicity}
L_i (a, x) = - \frac{1}{x \rho_{f,a}(x) } \mathrm{Cov} ( \log X , \1_{ \{ X \leq x \} } ) > 0
\eeq
where $X \sim \rho_{f, a} (x) \d x$.  In particular, the boundary weights depend on the parameters $a$ and $b$ in a monotonic fashion. 

\paragraph{Gibbs measure.} The Gibbs measure of the polymer in the environment specified by $m_{f^1}(a)\otimes m_{f^2}(b)\otimes m_{f^1}(a_3)$ will be denoted by,
$$
E^{a,b}_{m,n} [f] :=\frac{1}{Z_{m,n}^{ab}} \sum_{ x_\cdot \in \Pi_{mn}} f ( x_\cdot ) \prod_{i=1}^{n+m} \omega_{e_i (x_\cdot)} 
$$
for, e.g., bounded measureable $f$.  The variance and covariance with respect to the Gibbs measure will be denoted by $\Var^{a,b}_{m,n}$ and $\Cov^{a,b}_{m,n}$.  For events $\F$ we will use the notation $Q^{a,b}_{m,n} [ \F] = E^{a, b}_{m, n} [ \1_{\F} ] $ for the quenched measure.

\subsection{Rains-EJS formula} The stationarity and exponential structure \eqref{eqn: mellin} allow us to derive an exact expression for a generating function of the partition function with off-stationary initial data. This formula first appeared in last passage percolation in the work of Rains \cite{R}, but was recently re-introduced to the study of the model and used to great effect by Emrah-Janjigian-Sepp{\"a}l{\"a}inen \cite{EJS}. 


\begin{theorem}\label{thm: EJS} Let $(a, b) \in D(M_{f^1} ) \times D(M_{f^2} )$ satisfy $a+b = a_3$. 
Let $\lambda\in \mathbb{R}$ satisfy,
\[a-\lambda\in D(M_{f_1}), \quad b+\lambda\in D(M_{f_2}).\]
Then
\begin{align}\mathbb{E}[\e^{\lambda \log Z_{m,n}^{a-\lambda,b}}]= \exp\big(m(\psi^{f^1}_{-1}(a)-\psi^{f^1}_{-1}(a-\lambda))+ n(\psi^{f^2}_{-1}(b+\lambda)-\psi^{f^2}_{-1}(b))\big), \label{eqn: EJS-1}\\
\mathbb{E}[\e^{\lambda \log Z_{m,n}^{a,b-\lambda}}]= \exp\big(m(\psi^{f^1}_{-1}(a+\lambda)-\psi^{f^1}_{-1}(a))+ n(\psi^{f^2}_{-1}(b)-\psi^{f^2}_{-1}(b-\lambda))\big) .\label{eqn: EJS-2}
\end{align}
\end{theorem}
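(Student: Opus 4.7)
The plan is to derive \eqref{eqn: EJS-1} from a single Radon-Nikodym change of measure between the (off-stationary) measure $\mathbb{P}^{a-\lambda,b}$ and the stationary measure $\mathbb{P}^{a,b}$, followed by an application of the Burke-type property \eqref{eqn: Burke}. The key observation is that the two measures differ only in the distribution of the horizontal boundary weights $\{R^1_{i,0}\}$, which are iid $\rho_{f^1,a-\lambda}$ under the first and iid $\rho_{f^1,a}$ under the second. From the ratio of densities in \eqref{eqn: mellin} one immediately gets
\[
\frac{d\mathbb{P}^{a,b}}{d\mathbb{P}^{a-\lambda,b}} = \left(\frac{M_{f^1}(a-\lambda)}{M_{f^1}(a)}\right)^m \prod_{i=1}^m (R^1_{i,0})^{\lambda} = \left(\frac{M_{f^1}(a-\lambda)}{M_{f^1}(a)}\right)^m e^{\lambda S_{n,m}},
\]
since $S_{n,m} = \sum_{i=1}^m \log R^1_{i,0}$ by \eqref{eqn NSEW}.

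The clever step, which is where the identity $\log Z = S + E$ from \eqref{eqn: NSEW} enters, is to apply this change of measure with the test function $G = e^{\lambda E_{n,m}}$. Because $\log Z^{a-\lambda,b}_{m,n} = S_{n,m}+E_{n,m}$ as an identity of random variables, multiplying by $e^{\lambda S_{n,m}}$ absorbs the south-boundary sum into the full log-partition function:
\[
\mathbb{E}^{a,b}[e^{\lambda E_{n,m}}] = \left(\frac{M_{f^1}(a-\lambda)}{M_{f^1}(a)}\right)^m \mathbb{E}^{a-\lambda,b}\!\left[e^{\lambda E_{n,m}} e^{\lambda S_{n,m}}\right] = \left(\frac{M_{f^1}(a-\lambda)}{M_{f^1}(a)}\right)^m \mathbb{E}^{a-\lambda,b}\!\left[e^{\lambda \log Z^{a-\lambda,b}_{m,n}}\right].
\]
Rearranging gives $\mathbb{E}^{a-\lambda,b}[e^{\lambda \log Z}] = (M_{f^1}(a)/M_{f^1}(a-\lambda))^m\, \mathbb{E}^{a,b}[e^{\lambda E_{n,m}}]$. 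Now under the stationary measure $\mathbb{P}^{a,b}$ (where $a+b = a_3$), the Burke property \eqref{eqn: Burke} asserts that $E_{n,m}$ is a sum of $n$ iid copies of $\log R^2$ with $R^2 \sim \rho_{f^2,b}$, so its moment generating function factorizes and evaluates explicitly:
\[
\mathbb{E}^{a,b}[e^{\lambda E_{n,m}}] = \left(\mathbb{E}[(R^2)^\lambda]\right)^n = \left(\frac{M_{f^2}(b+\lambda)}{M_{f^2}(b)}\right)^n.
\]
Multiplying the two factors and taking logarithms, using $\psi^f_{-1}=\log M_f$, yields \eqref{eqn: EJS-1}. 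The second identity \eqref{eqn: EJS-2} follows from the mirror argument: do the change of measure on the vertical boundary weights, pick up $e^{\lambda W_{n,m}}$, and use the decomposition $\log Z = W + N$ together with the iid structure of $N_{n,m}$ on the north boundary.

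The proof is short, so there is no single hard step; the only subtlety to watch is making sure that the test function in the change of measure depends on the right set of edge weights, which is where invoking $\log Z = S+E$ (rather than some other decomposition) is essential. Convergence of the expectations and integrals involved is guaranteed by the hypotheses $a-\lambda \in D(M_{f^1})$ and $b+\lambda\in D(M_{f^2})$, which make all of the Mellin-type moments appearing above finite.
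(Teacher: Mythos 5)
Your proposal is correct and takes essentially the same route as the paper: both use the decomposition $\log Z = S_{n,m} + E_{n,m}$, exponentially tilt the horizontal boundary weights (you phrase this as a Radon--Nikodym change of measure from $\mathbb{P}^{a-\lambda,b}$ to $\mathbb{P}^{a,b}$, the paper phrases it as a direct computation with the joint density of $\log R^1_{i,0}$), and finish by applying the Burke property \eqref{eqn: Burke} to evaluate $\mathbb{E}^{a,b}[\e^{\lambda E_{n,m}}]$ as a product of one-dimensional Mellin ratios.
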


\begin{proof}
Write, using the second decomposition of \eqref{eqn: NSEW}:
\[\mathbb{E}[\e^{\lambda \log Z_{m,n}^{a-\lambda,b}}]\\
= \mathbb{E}[\e^{\lambda \sum_{i=1}^m \log R^1_{i,0}+\lambda E_{m,n}(a-\lambda,b)}].\]
We have denoted by $E_{m,n}(a-\lambda,b)$ the sum
\[ E_{m,n} (a-\lambda, b)=\sum_{j=1}^n \log R^2_{m,j}\]
in an environment with distribution
\[(R^1, R^2,X)\sim m_{f^1}(a-\lambda)\otimes m_{f^2}(b)\otimes m_{f^1}(a+b).\]
For $s \in D(M_{f^1})$, the joint density of $(\log R^1_{1,0}(s),\ldots,\log R^1_{m,0}(s) )$ is
\[
g(x_1,\dots,x_m)=\frac{\e^{s\sum_{k=1}^mx_k}}{M_{f^1}(s)^m}\prod_{k=1}^m f^1 (\e^{x_k}) .
\]
Therefore,
\[\mathbb{E}[\e^{\lambda \sum_{i=1}^m \log R^1_{i,0}(a-\lambda)+\lambda E_{m,n}^{a-\lambda,b}}]=\left(\frac{M_{f^1}(a)}{M_{f^1}(a-\lambda)}\right)^m \mathbb{E}[\e^{\lambda E_{m,n}(a,b)}].\]
By \eqref{eqn: Burke}, $E_{m,n}(a,b)$ is a sum of $n$ i.i.d. random variables whose distribution coincides with $W_{m,n}$ and so
\begin{align*}
    \mathbb{E}[\e^{\lambda E_{m,n} (a, b)}]&=\left(\int \e^{\lambda x}\e^{bx} \frac{f(\e^x)}{M_{f^2}(b)}\,\mathrm{d}x\right)^n\\
    &=\left(\frac{M_{f^2}(b+\lambda)}{M_{f^2}(b)}\right)^n\\
    &=\exp\big(n(\psi^{f^2}_{-1}(b+\lambda)-\psi^{f^2}_{-1}(b))\big).
\end{align*}
The formula \eqref{eqn: EJS-1} follows from this. The proof of \eqref{eqn: EJS-2} is almost identical, or may be deduced from \eqref{eqn: EJS-1}. \qed
 \end{proof}

\subsection{Taylor expansion}
Let 
\begin{equation}\label{eqn: e-def}
    \mfe =\mfe(a,b,m,n):=m-n\frac{\psi_1^{f_2}(b)}{\psi_1^{f_1}(a)}.
\end{equation}
Note that if we assume
\begin{equation}\label{eqn: char-direction}
|m- N \psi_1^{f_2}(b)|\le A N^{\frac{2}{3}}, \quad |n- N \psi_1^{f_1}(a)|\le A N^{\frac{2}{3}},
\end{equation}
for some $A \geq 0$ and asymptotic parameter $N$, 
then 
\[|\mfe(a,b,m,n)|\le CN^{\frac{2}{3}}.\]
As a corollary of Theorem \ref{thm: EJS}, we have the expansion
\begin{equation}\label{eqn: taylor}
\begin{split}
    &\mathbb{E}[\e^{\lambda(\log Z^{a-\lambda,b}_{m,n}-\mathbb{E}[\log Z_{m,n}^{a,b}])}]\\
    =&\exp\Big(-\frac{\lambda^2}{2}\psi^{f_1}_1(a)\cdot \mfe(a,b,m,n)+\frac{\lambda^3}{6}(m\psi_2^{f_1}(a) +n\psi_2^{f_2}(b))+(n+m)O(\lambda^4)\Big).
\end{split}
\end{equation}

\section{Derivatives} \label{sec:disc-derivs}
In this section, we state a few key monotonicity properties which we will be using.  For an upright path $x_\cdot \in \Pi_{m, n}$ we let $t_1$ be the $x$ coordinate of the rightmost vertex it touches on the $x$-axis and $t_2$ be the $y$ coordinate of the highest vertex it touches on the $y$-axis. We will need to differentiate the log partition function with respect to the parameters $a, b$; the quantities $t_i$ appear in the formulas for these derivatives.  We also define the following notation for weight of a path $x_\cdot = (x_i )_{i \leq i \leq m + n } \in \Pi_{m,n}$ by
\begin{align}
    W(a,b)(x_\cdot)&= \prod_{i=1}^{m+n} \omega_{x_{i-1},x_i}^{a,b} \notag \\
    &=\prod_{i=1}^{t_1} H_i(a)\prod_{j=1}^{t_2} H_j(b) \prod_{i = (t_1\vee t_2)+1}^{m+n} \omega_{x_{i-1},x_i}^{a,b}, \label{eqn:path-weight}
\end{align}
where the second formula will be useful in the calculations that follow.

\bep Let $a, b \in D (M_{f_1} ) \times D (M_{f_2} )$. Then,
\begin{align}
    \partial_a \log Z^{a,b}_{m,n}&= E^{a,b}_{m,n}\left[\sum_{i=1}^{t_1}L_{1}(R^1_{0,i})\right]&\ge 0 \label{eqn:discrete-deriv}
    \\
    \partial_a \partial_b \log Z^{a,b}_{m,n}&= \mathrm{Cov}^{a,b}_{m,n} \left(\sum_{i=1}^{t_1}L_{1}(R^1_{0,i}),\sum_{j=1}^{t_2}L_{2}(R^2_{j,0})\right) &\le 0 \label{eqn:cov-sign}
\end{align}
Here, $L_{i} (x) = L_i (a, x)$. 
\eep
\begin{proof} The equalities are proven by the following direct calculation. 
Differentiating the formula \eqref{eqn:path-weight} for the weight of a path $x_\cdot \in \Pi_{m,n}$ we find,
\[\partial_a W(a,b)(x_\cdot)= \sum_{i=1}^{t_1} \frac{\partial_a H_i(a)}{H_i(a)} \cdot W(a,b)(x_\cdot) \] 
The first identity follows from summing over paths. The second follows by a similar calculation differentiating the above formula for $\partial_a W(a, b)$ wrt to $b$ and again summing over the paths. 

For the claimed inequalities, the first follows from the fact that the $L_i$ are all positive. The second inequality follows because only one of $t_1$ or $t_2$ can be non-zero and again that the $L_i$ are all positive. \qed
\end{proof} 


\begin{prop} \label{prop:general-increasing}
Let $g: \mathbb{R}_+\rightarrow \mathbb{R}_+$ be an increasing function. Then, the expectation $E_{m,n}^{a,b}[g(t_1)]$ with respect to the polymer measure is an increasing function of $a$. 
\end{prop}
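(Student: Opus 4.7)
The approach is to differentiate $E^{a,b}_{m,n}[g(t_1)]$ in $a$ and recognize the derivative as a Gibbs covariance of two functions that are both monotone in the single random variable $t_1$. Using the coupling from \eqref{eqn:weight-monotonicity} and the weight decomposition \eqref{eqn:path-weight}, the only dependence of $W(a,b)(x_\cdot)$ on $a$ comes through the horizontal boundary weights $R^1_{i,0}(a) = H_2(a,U_i)$, and
\[
\partial_a \log W(a,b)(x_\cdot) \;=\; \sum_{i=1}^{t_1(x_\cdot)} L_1(R^1_{i,0}) \;=:\; h(t_1(x_\cdot)),
\]
which is exactly the quantity already appearing in \eqref{eqn:discrete-deriv}.

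Next I would apply the quotient rule to
\[
E^{a,b}_{m,n}[g(t_1)] \;=\; \frac{1}{Z^{a,b}_{m,n}}\sum_{x_\cdot\in\Pi_{m,n}} g(t_1(x_\cdot))\,W(a,b)(x_\cdot),
\]
which gives, by exactly the calculation that produces \eqref{eqn:cov-sign},
\[
\partial_a E^{a,b}_{m,n}[g(t_1)] \;=\; \Cov^{a,b}_{m,n}\bigl(g(t_1),\,h(t_1)\bigr).
\]
So the proposition reduces to the claim that this Gibbs covariance is non-negative.

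For this, I would condition on the environment. With the edge weights fixed, $h(t_1) = \sum_{i=1}^{t_1} L_1(R^1_{0,i})$ is a deterministic non-decreasing function of the integer-valued random variable $t_1$, because $L_1 > 0$ by \eqref{eqn:weight-monotonicity}. Since $g$ is non-decreasing by hypothesis, $g(t_1)$ is also a non-decreasing function of $t_1$. The Chebyshev (or one-variable FKG) correlation inequality then gives, under the quenched Gibbs measure,
\[
\Cov^{a,b}_{m,n}\bigl(g(t_1),h(t_1)\bigr) \;\geq\; 0,
\]
via the standard identity $(f(s)-f(t))(k(s)-k(t))\ge 0$ for monotone $f,k$ of a real variable, integrated against an independent copy of $t_1$. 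This holds for every realization of the environment, yielding the proposition.

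There is no real obstacle; the only point to keep track of is that differentiating in $a$ must be done under the coupling of \eqref{eqn:weight-monotonicity} so that $\partial_a \log W$ produces precisely the factor $h(t_1)$, and that the Chebyshev step uses monotonicity of both functions in the \emph{same} scalar variable $t_1$ — which crucially requires that $h$ has no contribution from the vertical boundary (it does not, since only one of $t_1,t_2$ is nonzero for any given path).
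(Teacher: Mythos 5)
Your proposal is correct and follows essentially the same route as the paper: differentiate under the coupling to get $\partial_a E^{a,b}_{m,n}[g(t_1)] = \Cov^{a,b}_{m,n}\bigl(g(t_1), \sum_{i=1}^{t_1} L_1(R^1_{0,i})\bigr)$, then observe that both arguments are nondecreasing functions of the single scalar $t_1$ (since $L_1 > 0$) and invoke one-variable positive association, pointwise in the environment. The paper states the same thing after the cosmetic rewriting $\sum_{i=1}^{t_1} L_i = \sum_{i=1}^m L_i\,\1_{\{t_1 \ge i\}}$, reducing to covariances $\Cov^{a,b}_{m,n}(g(t_1), \1_{\{t_1 \ge i\}}) \ge 0$; your direct application of the Chebyshev correlation inequality to the pair $(g, h)$ is equivalent. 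One small remark: the closing observation that ``$h$ has no contribution from the vertical boundary because only one of $t_1, t_2$ is nonzero'' is unnecessary here — $h$ has no $t_2$ contribution simply because $a$ only enters the weight through the horizontal boundary edges; the $t_1 t_2 = 0$ fact is instead what drives the sign in \eqref{eqn:cov-sign}.
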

\begin{proof}
Recalling the notation \eqref{eqn:path-weight} we write,
\[E_{m,n}^{a,b}[g(t_1)]=\frac{1}{Z^{a,b}_{m,n}}\sum_{x_\cdot \in \Pi_{m,n}} g(t_1(x_\cdot)) W(a,b)(x_\cdot),\]
and obtain by differentiation,
\begin{equation}\label{eqn: monotonicity-E}
\partial_a E_{m,n}^{a,b} [g(t_1)]=\mathrm{Cov}^{a,b}_{m,n} \left( g(t_1),\sum_{i=1}^{t_1} L_i(a) \right)=\sum_{i=1}^m L_i(a) \cdot \mathrm{Cov}^{a,b}_{m,n} \left( g(t_1), 1_{\{t_1\ge i\}} \right)\ge 0.
\end{equation}
The last inequality follows from positive association and $L_i(a)\ge 0$. \qed
\end{proof}

\section{Tail bound for $t_1$} 
\label{sec:disc-exit}
\begin{theorem} \label{thm:t1-bd-1} 
Let $0<\varepsilon_0\le 1$ be small.  Assume the parameters $a,b,m,n$ and $0 \leq \lambda_1 \leq \varepsilon_0$ are such that for some $C_1 >0$ we have,
\begin{equation}\label{eqn: characteristic}
    | m\psi_1^{f_1}(a+2 \lambda_1)-n\psi_1^{f_2}(b- 2\lambda_1)| \leq C_1 ,
\end{equation}
and $a+b = a_3$. 
Then, we have
\[\mathbb{E}[ Q^{a,b}_{m,n}[ t_1 > 0 ]]\le \exp\left( -\frac{1}{2}\lambda_1^3 (m\psi_2^{f_1}(a+2 \lambda_1)+n\psi_2^{f_2}(b-2 \lambda_1))+C(m+n)\lambda_1^4 +C_1 \lambda_1^2\right).\]
\end{theorem}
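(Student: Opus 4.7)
The plan is to combine a change-of-measure bound on the horizontal boundary with a Chernoff-style application of the Rains-EJS identity (Theorem \ref{thm: EJS}), with everything centered at the shifted stationary point $(a+2\lambda_1, b-2\lambda_1)$. This point is indeed stationary since $(a+2\lambda_1)+(b-2\lambda_1)=a+b=a_3$, and by the hypothesis \eqref{eqn: characteristic} the pair $(m,n)$ lies in the characteristic direction at this shifted point up to the error $C_1$.

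For the first step, I would use the path-weight formula \eqref{eqn:path-weight} together with the monotonicity \eqref{eqn:weight-monotonicity}. Since the bulk weights do not depend on $a$ or $b$, paths with $t_1=k$ satisfy
\[
Z^{a+2\lambda_1, b}_{m,n}[t_1=k] = Z^{a,b}_{m,n}[t_1=k] \prod_{i=1}^k \frac{H_2(a+2\lambda_1, U_i)}{H_2(a, U_i)},
\]
with every factor in the product being at least $1$ because $H_2(\cdot,u)$ is strictly increasing. For $k\ge 1$, bounding the product from below by its first term (the remaining factors being $\ge 1$) and summing over $k\ge 1$ yields the pointwise estimate
\[
Q^{a,b}_{m,n}[t_1>0] \;\le\; \frac{H_2(a, U_1)}{H_2(a+2\lambda_1, U_1)} \cdot \frac{Z^{a+2\lambda_1, b}_{m,n}}{Z^{a,b}_{m,n}}.
\]

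For the second step, I would control the annealed expectation of the right-hand side by decomposing the ratio of partition functions via a H\"older-type splitting into pieces of the form $\ee[(Z^{a',b'}_{m,n})^{\pm\lambda_1}]$ with $(a',b')$ stationary, each computed exactly via \eqref{eqn: EJS-1} or \eqref{eqn: EJS-2}. Taylor expanding the resulting exponents as in \eqref{eqn: taylor} around $(a+2\lambda_1, b-2\lambda_1)$, the linear parts cancel after Chernoff centering; the quadratic parts combine to give an $O(C_1\lambda_1^2)$ contribution by virtue of \eqref{eqn: characteristic}; the cubic parts combine to produce the claimed $-\tfrac{1}{2}\lambda_1^3\bigl(m\psi_2^{f_1}(a+2\lambda_1)+n\psi_2^{f_2}(b-2\lambda_1)\bigr)$ leading term; and the quartic remainder yields the $C(m+n)\lambda_1^4$ error. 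The single-edge factor $H_2(a,U_1)/H_2(a+2\lambda_1, U_1)\le 1$ contributes only an $O(\lambda_1^2)$ correction, absorbable into the $C_1\lambda_1^2$ term.

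The main technical difficulty is that the basic form of Rains-EJS gives the Laplace transform of a single $\log Z$ rather than a ratio of partition functions; the H\"older splitting into two stationary Laplace transforms resolves this and is also responsible for the coefficient $\tfrac{1}{2}$ in the cubic term (instead of the $\tfrac{1}{6}$ obtained from a single Chernoff application). The characteristic-direction condition \eqref{eqn: characteristic} plays an essential role: without it, the quadratic cross-term in the Taylor expansion of the EJS exponent would be of order $\lambda_1^2 N$ and would destroy the cubic scaling.
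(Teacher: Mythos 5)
Your overall plan — a pointwise change-of-measure reduction to a ratio of partition functions, then Cauchy--Schwarz and Rains--EJS, then Taylor expansion about the shifted stationary point — is the right shape, and your remarks about the origin of the $\tfrac{1}{2}$ coefficient (from the square root in Cauchy--Schwarz) and the role of \eqref{eqn: characteristic} in killing the quadratic term are both correct. However, the specific pointwise bound you set up in Step~1 leads to a ratio that the Rains--EJS identity cannot control, and this is a genuine gap.

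Your Step~1 yields
\[
Q^{a,b}_{m,n}[t_1>0] \;\le\; \frac{H_2(a,U_1)}{H_2(a+2\lambda_1,U_1)}\cdot \frac{Z^{a+2\lambda_1,b}_{m,n}}{Z^{a,b}_{m,n}},
\]
where you shift the \emph{horizontal} boundary parameter $a \mapsto a+2\lambda_1$ and keep $b$ fixed. The problem is that for a partition function $Z^{\alpha,\beta}_{m,n}$, Theorem~\ref{thm: EJS} computes $\ee\bigl[\exp\bigl(\lambda\log Z^{\alpha,\beta}_{m,n}\bigr)\bigr]$ \emph{only} for the single exponent $\lambda = a_3-\alpha-\beta$ (the ``off-stationarity''); there is no freedom to choose the moment once $(\alpha,\beta)$ is fixed. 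In your ratio, $Z^{a+2\lambda_1,b}$ has off-stationarity $-2\lambda_1$ and $Z^{a,b}$ has off-stationarity $0$. After raising your bound to a power $r\in(0,1]$ and applying Cauchy--Schwarz, you need the $2r$-moment of $Z^{a+2\lambda_1,b}$ and the $(-2r)$-moment of $Z^{a,b}$; matching these with the available Rains--EJS moments forces $2r=-2\lambda_1<0$ (and $-2r=0$), which is impossible for $r>0$. No H\"older-type splitting resolves this, because both factors in your ratio have the wrong off-stationarities.

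The missing idea is that one should shift the \emph{vertical} parameter $b$, exploiting the fact that any up-right path with $t_1>0$ has $t_2=0$ and hence does not touch the vertical boundary. Concretely, the paper first applies Proposition~\ref{prop:general-increasing} (with $g=\1_{\{\cdot>0\}}$) to move to $Q^{a+\lambda_1,b}[t_1>0]$, and then observes that the numerator $Z^{a+\lambda_1,b}_{m,n}[t_1>0]$ is unchanged when $b$ is replaced by $b+\lambda_2$, giving
\[
Q^{a+\lambda_1,b}_{m,n}[t_1>0]\;\le\;\frac{Z^{a+\lambda_1,b+\lambda_2}_{m,n}}{Z^{a+\lambda_1,b}_{m,n}}
\]
for \emph{any} $\lambda_2$. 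With $\lambda_2=-2\lambda_1$ the two partition functions have off-stationarities $\lambda_1$ and $-\lambda_1$, so after raising to the power $r=\lambda_1/2$ and applying Cauchy--Schwarz both resulting moments are exactly the ones Theorem~\ref{thm: EJS} can evaluate. Your Taylor-expansion steps would then go through essentially as you describe. As a secondary note, the single-edge prefactor $H_2(a,U_1)/H_2(a+2\lambda_1,U_1)$ you retain is of size $\exp(-O(\lambda_1))$ and correlated with the partition functions through $U_1$; it is neither an $O(\lambda_1^2)$ correction nor needed once the correct ratio is in place, so it is best simply dropped.
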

\begin{proof}
We first claim the following estimate for the upper tail of the exit point $t_1$: for any $0 \le r\le 1$, we have, for $\lambda_1\ge 0$, $\lambda_2 \in \rr$,
\begin{equation}
    Q^{a,b}_{m,n}[  t_1 > 0  ] \le  Q^{a,b}_{m,n}[ t_1 > 0  ]^r\le  \left(\frac{Z_{m,n}^{a+\lambda_1,b+\lambda_2}}{Z_{m,n}^{a+\lambda_1,b}}\right)^r.
\end{equation}
The first inequality follows since $x \le x^r$ for $0\le x \le 1$. For the second, note that by Proposition \ref{prop:general-increasing} with $g(x) = \1_{ \{ x > 0 \} }$ we first have
\[ Q^{a,b}_{m,n}[ t_1 > 0  ] \le  Q^{a+\lambda_1,b}_{m,n}[ t_1 > 0].\]
Next if $t_1>0$, then modifying the weights on the second axis has no effect:
\begin{align*}
   Q^{a+\lambda_1,b}_{m,n}[ t_1 > 0]&=\frac{1}{Z^{a+\lambda_1,b}_{m,n}}\sum_{x_\cdot \in \Pi_{m,n}} \1_{ \{t_1 ( x_\cdot ) > 0 \} } W(a+\lambda_1,b)(x)\\
    &=\frac{1}{Z^{a+\lambda_1,b}_{m,n}}\sum_{x_\cdot \in \Pi_{m,n}} \1_{ \{t_1 ( x_\cdot ) > 0 \} }  W(a+\lambda_1,b+\lambda_2)(x)\\
    &\le \frac{Z_{m+n}^{a+\lambda_1,b+\lambda_2}}{Z_{m,n}^{a+\lambda_1,b}}.
\end{align*}
We thus have
\[\mathbb{E}[  Q^{a+\lambda_1,b}_{m,n}[ t_1 > 0  ] ]\le \mathbb{E}\left[ \e^{r(\log Z_{m,n}^{a+\lambda_1,b+\lambda_2}-\log Z_{m,n}^{a+\lambda_1,b})} \right].\]
Choose $2r=\lambda_1$ and $\lambda_2=-2\lambda_1$. Applying Cauchy-Schwarz and Theorem \ref{thm: EJS} we have,
\begin{align*}
    \log \mathbb{E}[\e^{r(\log Z_{m,n}^{a+\lambda_1,b+\lambda_2}-\log Z_{m,n}^{a+\lambda_1,b})}]^2 &\leq \log \ee [ \e^{ \lambda_1 \log Z_{m,n}^{a+\lambda_1, b - 2 \lambda_1 } } ]  \ee[ \e^{ - \lambda_1 \log Z^{a+\lambda_1, b}_{m,n}}] \\
    &= m \left( \psi_{-1}^{f^1} (a+2 \lambda_1 ) + \psi_{-1}^{f^1} (a) - 2 \psi_{-1}^{f^1} (a + \lambda_1 ) \right) \\
    &+ n \left( 2 \psi_{-1}^{f^2} ( b - \lambda_1 ) - \psi_{-1}^{f^2} ( b - 2 \lambda_1 ) - \psi_{-1}^{f^2} (b) \right) \\
    &\leq -\lambda_1^3 ( m \psi_2^{f^1} (a +2 \lambda_1) + n \psi_2^{f^2} (b-2 \lambda_1) ) + C (m+n) \lambda_1^4 +C_1 \lambda_1^2.
\end{align*}
where the final inequality follows by a Taylor expansion and \eqref{eqn: characteristic}. The claim follows. \qed

From Lemma C.2 of \cite{CN} and smoothness of the $\psi_i^{f^i}$ we have the following.
\bel \label{lem:cn-sign}
Let $a, b$ satisfy $a+b = a_3$. Then,
\[
\psi_1^{f^1} (a) \psi_2^{f^2} (b) + \psi_1^{f^2} (b) \psi_2^{f^1} (a) > 0
\]
and this quantity is uniformly bounded away from $0$ for $a + b =a_3$ varying over compact sets.
\eel

\end{proof}

\begin{theorem}
Assume that $c_1 N\le \max\{m,n\}\le C_1 N$ for some $c_1, C_1 >0$. 
Recall the definition of $\mfe(a,b,m,n)$ in \eqref{eqn: e-def}. Assume $a+b = a_3$. 
There is a $0<\delta<1$  and $C, c>0$ such that
\begin{equation}\label{eqn: upper-bd}
\mathbb{E}[ Q [ t_1> \mfe+w  ]] \le  C \exp\Big(- \frac{w^3}{c^2 N^2}+CN^{-3}w^4\Big),
\end{equation}
valid for $0 \le w \le \delta N$ and $| \mfe | \le \delta N$ and $0 \leq \mfe + w$.
\end{theorem}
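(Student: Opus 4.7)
The plan is to reduce the statement to Theorem \ref{thm:t1-bd-1} applied to a \emph{shifted polymer} whose starting corner is at $(k,0)$, where $k=\mfe+w$. To construct this shifted polymer, I would introduce an auxiliary iid family of vertical boundary weights $\tilde R^2_{k,j}(b)\sim\rho_{f^2,b}$, $j\ge 1$, independent of everything else; combined with the original horizontal boundary weights $R^1_{k+1,0}(a),\dots,R^1_{m,0}(a)$ and the original interior weights, this gives a valid stationary polymer from $(k,0)$ to $(m,n)$ with parameters $(a,b)$. Let $\hat Z^{(a,b)}_{m-k,n}$ denote its partition function, $\hat Q$ its quenched Gibbs measure, and $\hat t_1$ its horizontal-exit point. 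Its characteristic offset is $\hat{\mfe}=(m-k)-n\psi_1^{f^2}(b)/\psi_1^{f^1}(a)=-w$.

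Applying Theorem \ref{thm:t1-bd-1} to this shifted polymer, I would choose the shift parameter $\lambda_1$ so that the characteristic hypothesis
\[
\bigl|(m-k)\psi_1^{f^1}(a+2\lambda_1)-n\psi_1^{f^2}(b-2\lambda_1)\bigr|=O(1)
\]
holds. Taylor expanding about $\lambda_1=0$ yields
\[
-w\psi_1^{f^1}(a)+2\lambda_1\bigl[(m-k)\psi_2^{f^1}(a)+n\psi_2^{f^2}(b)\bigr]+O(N\lambda_1^2),
\]
and Lemma \ref{lem:cn-sign} guarantees the coefficient of $\lambda_1$ has the correct (positive) sign and is of order $N$, so the admissible choice $\lambda_1\sim w/N$ with $\lambda_1>0$ works. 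Plugging into Theorem \ref{thm:t1-bd-1} gives $\mathbb E[\hat Q[\hat t_1>0]]\le \exp(-cw^3/N^2+Cw^4/N^3)$.

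To transfer this to the original polymer, I would use the factorization identity
\[
Q^{(a,b)}_{m,n}[t_1>k]
=\hat Q[\hat t_1>0]\cdot\frac{\prod_{i=1}^{k}R^1_{i,0}(a)\cdot \hat Z^{(a,b)}_{m-k,n}}{Z^{(a,b)}_{m,n}},
\]
which follows because both $Q[t_1>k]$ and $\hat Q[\hat t_1>0]$ share the same numerator $\prod_{i=1}^{k}R^1_{i,0}(a)\cdot Y^{>0}_{m-k,n}$, where $Y^{>0}_{m-k,n}$ is the partition function of paths from $(k,0)$ to $(m,n)$ that take a horizontal step at $(k,0)$ (and which does not involve the auxiliary boundary). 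Note that by stationarity, $\mathbb E[\log(\prod R^1\cdot\hat Z/Z)]=0$, so this multiplicative factor is centered, and the independence between $\prod_{i=1}^k R^1_{i,0}(a)$ and $\hat Z^{(a,b)}_{m-k,n}$ (they depend on disjoint sets of random variables, since the former uses only the first $k$ horizontal weights and the latter uses only the weights at positions $>k$ together with the auxiliary vertical family) makes its moments tractable. Applying Cauchy--Schwarz,
\[
\mathbb E\bigl[Q^{(a,b)}_{m,n}[t_1>k]\bigr]\le \mathbb E[\hat Q[\hat t_1>0]]^{1/2}\cdot\mathbb E\!\left[\Bigl(\tfrac{\prod R^1\cdot\hat Z}{Z}\Bigr)^{2}\right]^{1/2},
\]
where I used $\hat Q\in[0,1]$. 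The first factor inherits the desired $\exp(-cw^3/(2N^2)+Cw^4/N^3)$ decay from the previous step, absorbing the factor $1/2$ into the constant $c$.

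The main obstacle is bounding the second moment $\mathbb E[(\prod R^1\cdot \hat Z/Z)^2]$ uniformly (by a constant) over the admissible range of $k$. For this, I would exploit the EJS identity together with the stationarity decompositions \eqref{eqn: NSEW}--\eqref{eqn: Burke} to compute the ratio's moments: at leading order, $\log Z-\log\hat Z\approx \sum_{i=1}^k\log R^1_{i,0}(a)$ by the Burke property applied to the subrectangle $(0,0)\to(k,0)\to(m,n)$, which together with the centering identity keeps the ratio's log-moments bounded uniformly in $k$. Alternatively, one can apply a refined variant of the Cauchy--Schwarz plus EJS argument directly on the original polymer using a two-parameter shift $(\lambda_1,r)$ with $\lambda_1\sim(\mfe+w)/N$ and $r\sim w/N$, so that the $r^2$-coefficient from the Taylor expansion of the symmetric second difference of $\psi^{f^1}_{-1}$ is fed by the shifted characteristic $m\psi_1^{f^1}(a+\lambda_1)-n\psi_1^{f^2}(b-\lambda_1)\sim -cw$; the competition with the $(m+n)(r^4+\lambda_1^4)$ quartic corrections yields the announced $-w^3/(c^2N^2)+Cw^4/N^3$ after optimization.
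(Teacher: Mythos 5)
Your reduction to Theorem \ref{thm:t1-bd-1} on a shifted polymer, and the computation of $\lambda_1\sim w/N$ via Lemma \ref{lem:cn-sign}, track the paper's argument closely, and the factorization identity you write down is indeed correct. However, there is a genuine gap in the transfer step. You introduce \emph{independent} auxiliary vertical boundary weights $\tilde R^2_{k,j}(b)$ and then need $\mathbb{E}\bigl[(\prod_{i=1}^k R^1_{i,0}(a)\cdot\hat Z/Z)^2\bigr]$ to be bounded by a constant uniformly in $k$ and $N$. This is not true (and your sketch does not prove it): the log of this ratio is centered, but its variance is not $O(1)$ --- it is on the same KPZ scale as the fluctuations of the partition functions themselves --- so the second moment grows with $N$, and the Cauchy--Schwarz bound you propose would fail to produce the stated estimate for $w$ in the important range $w\sim N^{2/3}$. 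Your heuristic ``$\log Z-\log\hat Z\approx\sum_{i\le k}\log R^1_{i,0}$'' is only an approximation, and it is precisely the error in this approximation (a nontrivial random variable) that would need to be controlled; the vague ``two-parameter shift'' fallback doesn't resolve this either, since the event $\{t_1>k\}$ is not amenable to the same change-of-boundary trick as $\{t_1>0\}$.

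The paper avoids all of this by invoking the exact Burke-type stationarity statement of \cite[Lemma 5.1]{CN}, namely the \emph{distributional equality} $Q_{m,n}^{a,b}[t_1>k]\stackrel{d}{=}Q_{m-k,n}^{a,b}[t_1>0]$. This is cleaner than your construction: rather than defining a fresh stationary polymer with independent boundary weights and comparing, one uses that the increments of $Z$ along the vertical line through $(k,0)$ are themselves iid with the correct law (by the Burke property), so the shifted polymer is realized \emph{inside} the original one, and the ratio factor you worry about is identically $1$. Taking expectations then gives $\mathbb{E}[Q_{m,n}^{a,b}[t_1>k]]=\mathbb{E}[Q_{m-k,n}^{a,b}[t_1>0]]$ exactly, and Theorem \ref{thm:t1-bd-1} applies immediately with no Cauchy--Schwarz and no second-moment estimate. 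If you replace your auxiliary boundary with the actual Burke increments and cite \cite[Lemma 5.1]{CN}, your argument collapses to the paper's.
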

\begin{proof}
We have the stationarity statement \cite[Lemma 5.1]{CN}
\[  Q_{m,n}^{a,b}[ t_1> k ] \stackrel{d}{=}  Q_{m-k,n}^{a,b}[ t_1> 0 ].\]
We apply this with,
\[k=\lfloor \mfe+w \rfloor,\]
 and apply Theorem \ref{thm:t1-bd-1} to the latter quenched probability. We wish to choose $\lambda_1 >0$ satisfying,
 \beq \label{eqn:lambda-def}
 \frac{\psi^{f^2}_1(b)}{\psi^{f^1}_1(a)}-\frac{\psi_1^{f^2}(b-2\lambda_1)}{\psi_1^{f^1}(a+2\lambda_1)}=\frac{w}{n}.
 \eeq
The derivative of the LHS wrt $\lambda_1$ at $\lambda_1 = 0$ equals \[c(a,b):=2 \frac{\psi_2^{f^2}(b)\psi_1^{f^1}(a)+\psi_2^{f^1}(b)\psi_1^{f^2}(a)}{(\psi_1^{f^1}(a))^2}>0,\]
 by Lemma \ref{lem:cn-sign}. Therefore, for $w\leq \delta N$ for $\delta >0$ sufficiently small, the equation \eqref{eqn:lambda-def} has a solution satisfying $c(a, b) \lambda_1 = wn^{-1} + \O ( w^2 n^{-2} )$. A straightforward calculation using the definition of $\mfe$ shows that 
 \beq \label{eqn:char-dir-2}
 | (m-k) \psi_1^{f^1} (a + 2 \lambda_1 ) - n \psi_1^{f^2} ( b - 2 \lambda_1 ) | \leq C_2
 \eeq
 for some $C_2 >0$ independent of $w$.

 Theorem \ref{thm:t1-bd-1} now implies
 \begin{align*}
     \ee\left[ Q_{m-k,n}^{a,b}[ t_1> 0  ] \right] &\leq C \exp \left( - \frac{w^3}{ 2 c(a, b)^3 n^3}  \left( (m-k) \psi_2^{f^1} (a + 2 \lambda_1 ) + n \psi_2^{f_2} ( b - 2 \lambda_1 ) \right) + C w^4 N^{-3} \right) \\
     &\leq C  \exp \left( - \frac{w^3 \psi_1^{f^1} (a) }{ 4 c(a, b)^2 n^2}   + C w^4 N^{-3} \right) 
 \end{align*}
 where we used \eqref{eqn:char-dir-2} and smoothness of the functions in $\lambda_1$. The claim now follows. \qed

\end{proof}

From the previous result, we obtain 
\bec \label{cor:disc-tail-1} Assume that $c_1 N\le \max\{m,n\}\le C_1 N$ for some $c_1, C_1 >0$ and $a+b = a_3$. 
For any $\lambda \geq 0$, we have the estimate
\begin{equation}\label{eqn: three-two}
    \mathbb{E}[E_{m,n}^{a,b}[\e^{\lambda t_1}]]\le C\e^{C\lambda^{\frac{3}{2}} N+\lambda \mfe_+}.
\end{equation}
Here $\mfe_+=\max\{0,\mfe\}$ is the positive part of $\mfe$.
\eec
\begin{proof}
Write
\begin{align*}\mathbb{E}[\e^{\lambda t_1}]-1&\le\lambda \int_0^{\mfe_+} \e^{\lambda s}\,\mathrm{d}s+\lambda \int_{\mfe_+}^{\delta N}\e^{\lambda s} \mathbb{E}[ Q^{(a, b)}_{m,n} [ (t_1>s ]] \,\mathrm{d}s
+\lambda \int_{\delta N}^{CN}  \e^{\lambda s}\mathbb{E} [ Q^{a, b}_{m,n} [ t_1>\delta N]]\,\mathrm{d}s\\
&\le \e^{\lambda \mfe_+}+\lambda \e^{\lambda \mfe_+}\int_{0}^{\delta N-\mfe_+} \e^{\lambda w}\exp\Big(-\frac{w^3}{c^2N^2}+CN^{-3}w^4\Big)\,\mathrm{d}w+\e^{C \lambda N-\delta c N}.
\end{align*}
In the second step we have estimated $\mathbb{E}[ Q^{(a, b)}_{m,n} [ t_1>\delta N]]$ using \eqref{eqn: upper-bd} with $\delta$ sufficiently small. 
The last term $\e^{C \lambda N-\delta c N}$ is neglible for $C \lambda < \delta c$ and bounded by $\e^{\lambda^{3/2}C^{1/2} (\delta c)^{-1/2}N}$ for $C \lambda\ge \delta c$. For $\delta$ sufficiently small, the middle integral is bounded by
\[\int_0^\infty \e^{\lambda w-\frac{w^3}{2cN^2}}\,\mathrm{d}w\le C\e^{N \lambda^{\frac{3}{2}}}.\]
The claimed result is now clear. \qed
\end{proof}

\section{Tail bound} \label{sec:disc-tail}
We can now proceed to the tail bound. We will assume that \eqref{eqn: char-direction} holds. 
\begin{theorem} \label{thm:discrete-main-tech}
Assume \eqref{eqn: char-direction}, and $a+b = a_3$. There is a constant $\eps_0 >0$ so that
\begin{equation}\label{eqn: upper-lower}
c\e^{c\lambda^3N}\le \mathbb{E}[\exp\big(\lambda(\log Z_{m,n}^{a,b}-\mathbb{E}[\log Z_{m,n}^{a,b}])\big)]\le C\e^{C\lambda^3N}
\end{equation}
holds for all $0 \leq \lambda \leq \eps_0$. 
\end{theorem}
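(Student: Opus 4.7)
Since the coupling \eqref{eqn:weight-monotonicity} makes $\log Z^{a,b}$ monotonically increasing in the boundary parameter $a$, we have $\log Z^{a,b}\geq \log Z^{a-\lambda,b}$ for $\lambda\geq 0$, and hence
\[
\ee\bigl[e^{\lambda(\log Z^{a,b}-\ee[\log Z^{a,b}])}\bigr] \geq \ee\bigl[e^{\lambda(\log Z^{a-\lambda,b}-\ee[\log Z^{a,b}])}\bigr].
\]
The right-hand side is given exactly by Theorem \ref{thm: EJS}, and its Taylor expansion appears in \eqref{eqn: taylor}. Under \eqref{eqn: char-direction} one has $|\mfe|\leq CN^{2/3}$, so the quadratic term is of order $\lambda^2 N^{2/3}=(\lambda^3 N)^{2/3}$ and absorbed via $x^{2/3}\leq 1+x$; the cubic term $\tfrac{\lambda^3}{6}(m\psi_2^{f^1}(a)+n\psi_2^{f^2}(b))$ is at least $c\lambda^3 N$ by Lemma \ref{lem:cn-sign} combined with $m\sim N\psi_1^{f^2}(b)$ and $n\sim N\psi_1^{f^1}(a)$; the quartic remainder $O(N\lambda^4)$ is dominated by the cubic for $\lambda\leq \eps_0$ small. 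The bound $ce^{c\lambda^3 N}$ follows.

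\textbf{Upper bound setup.} I will write $\log Z^{a,b}=\log Z^{a-\lambda,b}+\Delta$ with $\Delta\geq 0$. Using the derivative formula \eqref{eqn:discrete-deriv}, Proposition \ref{prop:general-increasing} (monotonicity of $E^{s,b}[t_1]$ in $s$), and local boundedness of $L_1(s,\cdot)$ for $s$ in compact subsets of $D(M_{f^1})$,
\[
\Delta = \int_{a-\lambda}^a E^{s,b}\Bigl[\sum_{i=1}^{t_1}L_1(s,R_{i,0}^1(s))\Bigr]\,ds \leq C\lambda\,E^{a,b}[t_1].
\]
Jensen's inequality applied to the Gibbs expectation then yields $e^{\lambda\Delta}\leq E^{a,b}[e^{C\lambda^2 t_1}]$, so
\[
\ee\bigl[e^{\lambda\log Z^{a,b}}\bigr] \leq \ee\bigl[e^{\lambda\log Z^{a-\lambda,b}} \cdot E^{a,b}[e^{C\lambda^2 t_1}]\bigr].
\]
Taken separately, Rains-EJS (Theorem \ref{thm: EJS}) controls $\ee[e^{\lambda\log Z^{a-\lambda,b}}]$ by $Ce^{\lambda\ee[\log Z^{a,b}]+C\lambda^3 N}$, and Corollary \ref{cor:disc-tail-1} applied at parameter $C\lambda^2$ controls $\ee[E^{a,b}[e^{C\lambda^2 t_1}]]$ by $Ce^{C\lambda^3 N}$ (absorbing $\lambda^2\mfe_+$ as in the lower bound paragraph).

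\textbf{Main obstacle.} The central difficulty is combining the two factors above in view of their positive correlation through the randomness of the $U_i$'s: both are monotone increasing in each $U_i$, so FKG only gives positive correlation. A direct Cauchy-Schwarz introduces $\sqrt{\ee[e^{2\lambda\log Z^{a-\lambda,b}}]}$, for which Rains-EJS does not apply since the exponent $2\lambda$ no longer matches the parameter shift $\lambda$ and the partition function is non-stationary. My plan to circumvent this is a truncation at the level of the quenched exit point: on the event $\{E^{a,b}[t_1]\leq K_0\lambda N\}$ (with $K_0$ a large constant), the bound on $\Delta$ becomes deterministic, $\Delta\leq CK_0\lambda^2 N$, so the contribution is at most $e^{CK_0\lambda^3 N}\ee[e^{\lambda\log Z^{a-\lambda,b}}]$ and is handled directly by Rains-EJS. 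On the complementary event, Chebyshev combined with Corollary \ref{cor:disc-tail-1} yields a super-exponentially small probability of the form $\exp(-cK_0^3\lambda^3 N)$, which must be paired with an a priori estimate on $e^{\lambda\log Z^{a,b}}$ (for instance obtained from the variance-type bounds of \cite{NS}, or by a bootstrap using the theorem itself at a larger argument). Choosing $K_0$ large enough and matching these two contributions cleanly for all $\lambda\in[0,\eps_0]$ is the main technical hurdle I foresee.
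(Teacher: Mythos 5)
You bound $\sum_{i=1}^{t_1}L_1(s,R^1_{i,0}(s))\le C\,t_1$ by invoking ``local boundedness of $L_1(s,\cdot)$.'' This is not available. The random variables $L_i=L_1(a,R^1_{0,i})$ are unbounded in general; the paper is careful to use only the weaker statement that they have a \emph{finite exponential moment} (citing \cite[Remark 3.7]{CN}). This is why the paper's proof centers $L_i=\ee[L_1]+\overline L_i$ and then runs Sepp\"al\"ainen's device from \cite[Lemma~4.2]{S}: split $E^{s,b}[e^{2\lambda^2\overline S}]$ over the value of $t_1$, compare $e^{2\lambda^2\overline S_i}$ to $e^{C'\lambda^2 i}$ on the typical event and control the exceptional term using the exponential moment. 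Your $\Delta\le C\lambda\,E^{a,b}[t_1]$ bound only handles the mean part $\ee[L_1]\,t_1$ of the sum; the fluctuating part is the real work, and it is entirely missing from your argument.

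\textbf{Gap 2: the ``main obstacle'' is not one.} You argue that Cauchy--Schwarz between the two factors doubles $\lambda$ to $2\lambda$ and so Rains--EJS no longer applies, and then propose an elaborate truncation on $\{E^{a,b}[t_1]\le K_0\lambda N\}$ (which, as you note, is itself incomplete and threatens circularity on the complementary event). The paper's resolution is much simpler: apply Cauchy--Schwarz to $\exp\big((\lambda/2)\log Z^{a,b}\big)=\exp\big((\lambda/2)\log Z^{a-\lambda,b}\big)\exp\big((\lambda/2)\Delta\big)$, which yields
\[
\ee\big[\exp((\lambda/2)\log Z^{a,b})\big]^2\le \ee\big[\exp(\lambda\log Z^{a-\lambda,b})\big]\,\ee\big[\exp(\lambda\Delta)\big].
\]
The first factor has exponent $\lambda$ and shift $\lambda$, so it matches the Rains--EJS identity \eqref{eqn: EJS-1} exactly; no $2\lambda$ ever appears. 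One then carries the bound back from exponent $\lambda/2$ to $\lambda$, which only costs a factor of $2$ in $\eps_0$. The second factor is handled by Jensen, the centering of $L_i$, and the exit-point bound from Corollary~\ref{cor:disc-tail-1}, which is where the $e^{C\lambda^3 N}$ emerges. Your lower-bound argument and the identification of the key ingredients (Rains--EJS, the taylor expansion \eqref{eqn: taylor}, the exit-point tail) are correct, but the upper-bound route you outline does not close, and the detour is unnecessary.
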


\begin{proof}We first write:
\begin{equation}
\label{eqn: FTC}
\log Z_{m,n}^{a,b}=\log Z_{m,n}^{a-\lambda,b}+\int_{a-\lambda}^a \partial_s \log Z_{m,n}^{s,b}\,\mathrm{d}s.
\end{equation}
By Cauchy-Schwarz, we have
\[\mathbb{E}[\exp((\lambda/2) \log Z_{m,n}^{a,b})]^2\le \mathbb{E}[\exp(\lambda \log Z_{m,n}^{a-\lambda,b})]\mathbb{E}[\exp\big(\lambda \int_{a-\lambda}^a \partial_s \log Z_{m,n}^{s,b}\,\mathrm{d}s\big)].\]
Using the Taylor expansion \eqref{eqn: taylor}, the first factor is bounded as follows
\[\mathbb{E}[\exp(\lambda \log Z_{m,n}^{a-\lambda,b})]\le \e^{\lambda \mathbb{E}[\log Z_{m,n}^{a,b}]}\cdot\e^{ C\lambda^3N + \mfe \lambda^2} \leq \e^{\lambda \mathbb{E}[\log Z_{m,n}^{a,b}]}\cdot\e^{ C\lambda^3N }.\]
for $|\lambda|<1$ sufficiently small, using
\beq \label{eqn:assump-imp}
\lambda^2 \mathfrak{e}_+\le \frac{1}{3}(2C_1^{3/2}\lambda^3 N+1),
\eeq
which holds under \eqref{eqn: char-direction}. 
Next, consider the integral
\beq \label{eqn:disc-tail-a1}
\mathbb{E}\left[\exp\left(\lambda \int_{a-\lambda}^a \partial_s \log Z_{m,n}^{s,b}\,\mathrm{d}s\right)\right]=\mathbb{E}\left[\exp\left(\lambda \int_{a-\lambda}^a E_{n,m}^{s,b}\Big[\sum_i^{t_1} L_i\Big]\,\mathrm{d}s\right)\right],
\eeq
where we used \eqref{eqn:discrete-deriv} and for the remainder of the proof we let $L_i := L_1 (R_{0,i}^1 ).$ We recall now that for each value of $(s, b)$ the $\{ L_i \}_{i=1}^m$ are iid. For a random variable $X$ we use the notation $\overline{X} := X- \ee[X]$. We center,
\[L_i=\mathbb{E}[L_1]+\overline{L}_i.\]
The RHS of \eqref{eqn:disc-tail-a1} is bounded above by,
\[\mathbb{E}\Big[\e^{C\lambda^2 E^{a,b}_{n,m}[t_1]}\exp\big(\lambda \int_{a-\lambda}^a E^{s,b}_{n,m}[\overline{S}]\,\mathrm{d}s\big)\Big]\le \mathbb{E}[ E^{a,b}_{n,m} [ \e^{2C\lambda^2 t_1}] ]^{\frac{1}{2}}\mathbb{E}\Big[\exp\big(2\lambda \int_{a-\lambda}^a E^{s,b}_{n,m}[\overline{S}]\,\mathrm{d}s\big)\Big]^{\frac{1}{2}},\]
where $\overline{S} = \sum_{i=1}^{t_i} \overline{L}_i$ denotes the centered sum and we have used Proposition \ref{prop:general-increasing}. By Jensen's inequality, we have
\[\mathbb{E}\Big[\exp(2\lambda \int_{a-\lambda}^a E_{n,m}^{s,b}[\overline{S}])\,\mathrm{d}s\Big]\le \frac{1}{\lambda}\int_{a-\lambda}^a \mathbb{E}[E_{n,m}^{s,b}[\e^{2\lambda^2 \overline{S}}]]\,\mathrm{d}s.\]
Let now $\overline{S}_i = \sum_{k=1}^i \overline{L}_k$. Then, following the method in \cite[Lemma 4.2]{S}, we have for any $C' >0$,
\begin{align*}
E_{n,m}^{s,b}[e^{2\lambda^2 \overline{S}}]= &\sum_{i=1}^m \e^{2\lambda^2 \overline{S}_i}Q^{s,b}_{m,n}(t_1=i)\\
    \le&~ \sum_{i=1}^m \e^{C' \lambda^2 i} Q^{s,b}_{m,n}(t_1=i)+\sum_{i=1}^m \e^{\lambda S_i} \1_{ \{S_i> C' i\} }\\
    \le&~ E^{s,b}_{m,n}[\e^{C' \lambda^2 t_1} ]+X(C',s)\\
    \le&~ E^{a,b}_{m,n}[\e^{C'\lambda^2 t_1}]+X(C',s),
\end{align*}
where $\mathbb{E}[X(C',s)]$ is bounded uniformly in $s$ for large enough $C'$, since $L_i$ has a finite exponential moment for each of the four integrable models (See \cite[Remark 3.7]{CN}). Note that in the last inequality we again used Proposition \ref{prop:general-increasing}. In the end, we obtain an estimate of the form
\[\mathbb{E}\Big[\exp\left(2\lambda \int_{a-\lambda}^a \partial_s \log Z_{m,n}^{s,b}\,\mathrm{d}s\right)\Big]\le C\mathbb{E}\big[E^{a,b}_{m,n}[\e^{C\lambda^2 t_1}]+1\big]\le C\e^{C\lambda^3 N+\lambda^2 \mathfrak{e}_+},\]
for some constant $C$. 
In the last step we have used \eqref{eqn: three-two}. Using again \eqref{eqn:assump-imp} that holds under the assumption \eqref{eqn: char-direction}, we have 
obtained the upper bound in \eqref{eqn: upper-lower}. The lower bound follows by dropping the second term on the right side of \eqref{eqn: FTC} and applying \eqref{eqn: taylor} to find
\[\mathbb{E}[\exp(\lambda \log Z_{m,n}^{a-\lambda,b})]\ge \e^{\lambda \mathbb{E}[\log Z_{m,n}^{a,b}]}\cdot\e^{c\lambda^3N  - \lambda^2 \mfe}\]
for $\lambda$ small enough.  We then use that for any $\eps >0$ we have $\lambda^2 \mfe \leq \eps \lambda^3 N + C_\eps$ under assumption \eqref{eqn: char-direction}. \qed
\end{proof}

\noindent{\bf Remark.} Under the above assumptions one can obtain the estimate,
\[
\mathbb{E}[\exp\big(-\lambda(\log Z_{m,n}^{a,b}-\mathbb{E}[\log Z_{m,n}^{a,b}])\big)] \leq C \e^{ C \lambda^3 N}
\]
via a similar proof. Indeed, instead of \eqref{eqn: FTC} one may write,
\[
\log Z^{a+\lambda, b}_{n, m} \leq \log Z^{a, b}_{n, m} + \int_0^\lambda \del_{s_1} \log Z^{s_1,s_2}_{n,m} \vert_{s_1= a+s, s_2 = b-s} \d s
\]
using \eqref{eqn:cov-sign}. Then, following the proof line-by-line one finds the need to estimate, for $0 < s < \lambda$,
\[
\ee \left[ E^{a+s,b-s}_{n,m} [ \e^{ C \lambda^2 t_1 } ] \right] \leq C \e^{ C \lambda^3 N + C \lambda^2 \mfe ( a+s, b-s, n, m )_+ } \leq C \e^{ C \lambda^3 N}
\]
with the last inequality following from $|\mfe ( a+s, b-s, n, m ) - \mfe (a, b, n, m) | \leq C N \lambda$ and \eqref{eqn: char-direction}. Everything else is identical.   \qed

\section{Diffusion model and statement of results} 
\label{sec: diffusion}  
We consider the following system for $N$ diffusions $ \{ u_i (t) \}_{i=1}^N \in \rr^N$,
\begin{align}
\d u_1 &= - V' (u_1 ) \d t  + \d B_0 - \theta \d t + \d B_1 \notag \\
\d u_j &= - V' (u_j ) \d t  + V' (u_{j-1} ) \d t + \d B_j - \d B_{j-1} , \qquad j \geq 2 .
\end{align}
Above, the $\{ B_j (t) \}_j$ are a family of independent standard Brownian motions. 
The class of potentials we consider is the following, the name ``O'Connell-Yor'' will be made clear momentarily.
\bed \label{def:V-def} 
We say $V$ is of O'Connell-Yor-type if $V \geq 0$ is a smooth convex function satisfying,
\beq \label{eqn:assumption}
V(x) \geq c |x|^2 \1_{ \{ x \leq - C \} } , \qquad V'(x) \leq 0
\eeq
and 
\beq \label{eqn:deriv-assump}
c_0 V''(x) \leq - V''' (x) \leq \frac{1}{c_0} V'' (x) + C \1_{ \{ x \geq -C \} }
\eeq
\eed
Under these assumptions, the above system is a Markov process admitting global-in-time strong solutions with a unique invariant measure $\omega_\theta$ of product form, \cite[Proposition 2.2]{LNS},
\beq \label{eqn: stationary}
\d \omega_\theta (u) := \prod_{i=1}^N \d \nu_\theta ( u_i ) := \prod_{i=1}^N   \frac{\e^{ - \theta u_i - V (u_i)} }{ Z ( \theta ) } \d u.
\eeq

The observable we consider is the following.
\bed Let $\{ u_j (t) \}_{j=1}^N$ be the solution to \eqref{eqn:sys} with initial data distributed according to the invariant measure $\omega_\theta$.  The height function is defined by,
\[
W^\theta_{N, t} := \sum_{j=1}^N u_j (t) - B_0 (t) + \theta t.
\]
\eed
In the special case $V(u) = \e^{-u}$, it is well-known that this height function corresponds to the log partition function of the stationary O'Connell-Yor polymer \cite{OY}. For the reader's convenience, we recall that this is given by,
\[
 Z^{OY}_{N, t} (\theta) := \int_{-\infty < s_0 < \dots < s_{N-1} < t } \e^{ \theta s_0 - B_0 (s_0) + \sum_{j=1}^N B_j (s_j ) - B_j (s_{j-1} ) } \d s
\]
where the $\{ B_j \}_j$ are extended to two-sided Brownian motions equal to $0$ at $t=0$ and we use the convention $s_N = t$. We will make no use of this representation in proving our main results.

A large class of potentials $V(u)$ satisying the assumptions of Definition \ref{def:V-def} are given by Laplace transforms of finite, positive measures with support compactly supported in $(0, \infty)$, as well as their small perturbations by compactly supported smooth functions.

Given $V$ as above, we define for $k \geq -1$,
\[
\psiV_k (\theta ) := \frac{ \d^{k+1} }{ \d \theta^{k+1} } \log Z  ( \theta ) .
\]
We remark that,
\beq \label{eqn:W-expect}
\ee[ W_{N, t}^\theta ] = \theta t - N \psiV_0 ( \theta ).
\eeq
Later we will see that a special role is played by the function,
\beq \label{eqn:e-def}
\mff ( \theta , t ) := t -N \psiV_1 ( \theta ) .
\eeq
We will prove the following tail estimate for the height function.
\bet \label{thm:main-diff}
 Let $V$ be a potential of O'Connell-Yor type and $\theta$ satisfy 
 \beq \label{eqn:curvature}
 \psiV_2 ( \theta) < 0 .
 \eeq
 Suppose there is an $A >0$ so that,
\beq \label{eqn:main-diff-1}
|N \psiV_1 ( \theta ) - t | \leq A N^{2/3}.
\eeq
Then, there is a $c>0$ so that for all $ c N^{2/3} \geq s>0$,
\[
\pp\left[ | W^\theta_{N, t} - \ee[ W^\theta_{N, t} ] | > s N^{1/3} \right] \leq c^{-1} \e^{ - c s^{3/2} }
\]
and a $C >0$ so that
\[
\pp\left[  W^\theta_{N, t} -\ee[ W^\theta_{N, t} ]  > s N^{1/3} \right] \geq C^{-1} \e^{ - C s^{3/2} }
\]
\eet
\remark If one wishes to drop the requirement \eqref{eqn:main-diff-1}, our proof yields instead the estimates, 
\beq
\pp\left[ | W^\theta_{N, t} - \ee[ W^\theta_{N, t} ] | > s N^{1/3}  \right] \leq C \begin{cases} \e^{ - c N^{2/3} s^2 | \mff ( t , \theta ) |^{-1} }, &  s \leq |\mff (t , \theta ) |^2 N^{-4/3} \\
\e^{ - c s^{3/2} } , &  s >|\mff (t , \theta ) |^2 N^{-4/3} \end{cases}
\eeq
and
\beq
\pp\left[  W^\theta_{N, t} - \ee[ W^\theta_{N, t} ]  > s N^{1/3}  \right] \geq c \begin{cases} \e^{ - C N^{2/3} s^2 | \mff ( t , \theta ) |^{-1} }, &  s \leq |\mff (t , \theta ) |^2 N^{-4/3}  \\
\e^{ - C s^{3/2} } , & s >|\mff (t , \theta ) |^2 N^{-4/3}  \end{cases}
\eeq
for $ 0 \leq s \leq c N^{2/3}$. In fact, one can obtain the same Gaussian tail as a lower bound for the event $\{ W^\theta_{N, t} - \ee[ W^\theta_{N, t} ] < -s N^{1/3} \}$ for $s < c |\mff (t , \theta ) |^2 N^{-4/3} $, some $c>0$. \qed 
\section{Diffusions preliminaries} \label{sec:diff-prelim}

\subsection{Couplings}

We will need to consider the system for many parameter values simultaneously. In this section we introduce the couplings we will use. Let $F_\theta$ denote the cumulative distribution function,
\[
F_\theta (x) := \int_{-\infty}^x \d \nu_\theta (u)
\]
and let $H_\theta (u)$ denote its inverse. By the implicit function theorem, this is a smooth function in $u$ and $\theta$.  For notational simplicity, we choose a realization of the Brownian motions so that they are continuous for every point in the underlying probability space.  In \cite[Proposition 3.1]{LNS} we showed that for every choice of initial data there exists a solution to \eqref{eqn:sys} for every realization of the Brownian motions, in that there are continuous functions $u_j (t)$ satisfying the integrated form of these equations. Moreover, this process is a Markov process with unique invariant measure $\omega_\theta$.

Let now $\{ U_j \}_{j=1}^N$ be a sequence of iid uniform $(0, 1)$ random variables.  For $\eta >0$ and $\theta >0$ let $u_j (t, \eta, \theta)$ for $ t  \geq 0$ be defined by,
\begin{align*}
\d u_1 &= - V' (u_1 ) \d t  + \d B_0 - \theta \d t + \d B_1  \\
\d u_j &= - V'' (u_j ) \d t  - V' (u_{j-1} ) \d t + \d B_j - \d B_{j-1} , \qquad j \geq 2 .
\end{align*}
with initial data $u_j (0, \eta, \theta )  = H_\eta ( U_j )$.  We will denote,
\beq \label{eqn:W-def}
W_{N, t} (\eta, \theta ) := \sum_{j=1}^N u_j (t, \eta, \theta ) - B_0 (t) + \theta t 
\eeq
so that for $t>0$ we have that $W_{N, t} (\theta, \theta)$ has the same distribution as $W_{N, t}^\theta$.

\paragraph{Remark.} In the arguments that follow, it may be helpful to the reader to consider the O'Connell-Yor case in which an essentially equivalent two-parameter model may be realized by
\[
Z^{OY}_{N, t} ( \eta , \theta ):=  \int_{-\infty < s_0 < \dots < s_{N-1} < t } \e^{ -\eta (s_0)_- +\theta (s_0)_+ - B_0 (s_0) + \sum_{j=1}^N B_j (s_j ) - B_j (s_{j-1} ) } \d s .
\]
For example, derivatives of $\log Z^{OY}_{N, t}$ wrt $\eta, \theta$ are easily computed in terms of quenched moments of $(s_0)_-$ and $(s_0)_+$ and their signs are deduced in a trivial fashion.  A tail estimate for $(s_0)_+$ can then be deduced via almost identical arguments to those in Section \ref{sec:disc-exit}, and the tail estimate for $\log Z^{OY}_{N, t}$ is the same as in Section \ref{sec:diff-tail}. The difference between this case and the more general diffusions model is that the montonicity properties are more transparent in the OY case. In fact, the arguments for the OY polymer may be more transparent than even the discrete polymer case, as there is no appearance of the logarithmic derivatives $L_i(a)$ which introduced complications in deducing the tail estimate for the discrete polymer free energy from the exit point bounds. \qed 

\subsection{Derivatives}

We will need the following.
\bep \label{prop:diff-derivs}
The functions $u_j (t, \eta, \theta)$ and $W_{j, t} (\eta, \theta)$ are $C^2$ in the parameters $(\eta, \theta)$ and satisfy the inequalities,
\beq
\del_\eta W_{N, t} (\eta, \theta) \leq 0 , \qquad \del_\theta W_{N, t}  (\eta, \theta) \geq 0
\eeq
and
\beq
\del_\eta \del_\theta W_{N, t} (\eta, \theta) \geq 0 , \qquad \del_\theta^2 W_{N, t} (\eta, \theta) \geq 0.
\eeq
\eep
\proof Differentiability follows from \cite[Proposition 5.1]{LNS}. The inequalities for the first derivatives follow from \cite[Proposition 5.3]{LNS}. The signs for the second derivatives follow from \cite[Lemma 4.3]{LNS}. \qed

\subsection{Generating function}

We require the following \cite[Proposition 6.1]{LNS}.  It is the version of the Rains-EJS identity for our systems of diffusions. 
\bep \label{prop:generat}
Let $W_{N, t} ( \eta, \theta)$ be as in \eqref{eqn:W-def} and define,
\[
\varphi ( \theta ) := N \psiV_{-1} ( \theta ) - \frac{1}{2} \theta^2 t = N \log (Z( \theta)) - \frac{1}{2} \theta^2 t.
\]
Then,
\[
\ee\left[ \exp \left(  ( \eta - \theta ) W_{N, t} ( \eta , \theta ) \right) \right] = \exp \left( \varphi ( \theta ) - \varphi ( \eta ) \right).
\]
\eep

Using this, we derive the following corollary.
\bec \label{cor:diff-generat}
For $\eta, \theta >0$ we have
\begin{align*}
 & \ee\left[ \exp \left ( \eta - \theta ) W_{N, t} ( \eta , \theta ) \right) \right] \notag \\
= & \exp \left( ( \eta - \theta ) \ee[ W_{N, t} ( \eta , \eta ) ] - \frac{ ( \theta - \eta)^2}{2} \mff ( \eta , t) + \frac{ ( \theta - \eta)^3}{6} N\psiV_2 ( \eta ) + \O ( N ( \eta - \theta)^4 ) \right)
\end{align*}
\eec
\proof We have by Taylor expansion,
\begin{align*}
&N ( \psi_{-1}^V ( \theta ) - \psi_{-1}^V ( \eta ) ) + \frac{1}{2} t ( \eta^2 - \theta^2) \notag \\
= &( \theta - \eta ) ( N \psiV_0  ( \eta ) - t \eta ) + \frac{ ( \theta - \eta)^2}{2} \left( N \psiV_1 ( \eta ) - t \right) + \frac{ ( \theta - \eta)^3}{6} \psiV_2 ( \eta) \notag\\
+ & \O ( N (\theta - \eta^4 ))
\end{align*}
The claim follows from \eqref{eqn:W-expect} and \eqref{eqn:e-def}. \qed

Similar to the proof of Theorem 2.2 of \cite{EJS}, one can deduce the following bound for the wedge-initial condition O'Connell-Yor polymer, which can be written in terms of the two parameter model as $Z^{OY}_{N, t} ( \infty, 0)$.

\bec \label{cor:oy-non-stat}
Let $c>0$ so that $c N \leq t \leq c^{-1} N$. Let $\theta$ satisfy $\psiV_1 (\theta_0 ) = tN^{-1}$ for $V = \e^{-x}$. Then,
\[
\pp \left[ \log Z_{N, t}^{OY} ( \infty, 0) > u + \theta_0 t- N\psiV_0 ( \theta_0 ) \right] \leq \exp\left( - N \frac{ 2 \sqrt{2}}{3} \frac{ u^{3/2}}{| \psiV_2 (\theta_0 ) |^{1/2} } + C N u^2\right)
\]
for all $0 \leq u \leq N$.
\eec
\proof For any $(\eta, \theta)$ we have $Z^{OY}_{N, t} ( \infty, 0) \leq Z^{OY}_{N, t} ( \eta , \theta)$. Therefore by Markov's inequality and any $\theta, a >0$ we have by Proposition \ref{prop:generat},
\[
\pp\left[ \log Z_{N, t}^{OY} ( \infty, 0) > s \right] \leq \e^{ - a s} \e^{ \varphi ( \theta) - \varphi ( \theta + a ) }.
\]
The claim follows from the choice $\theta = \theta_0$ (which makes the quadratic terms in the Taylor expansion of $\varphi ( \theta) - \varphi ( \theta + a )$ vanish) and $a = (2u)^{1/2} | \psiV_2 ( \theta_0) |^{-1/2}$ where $s = u +  (\theta t- N\psiV_0 ( \theta_0 ) )$ (which optimizes in $a$ between the linear term and cubic terms in the above estimate), and a Taylor expansion to third order of $\varphi ( \theta + a)$ around $\theta = \theta_0$. \qed

\subsection{Pseudo-Gibbs measure}

In \cite{LNS} we introduced a measure on $[0, t]$ that plays a similar role to the Gibbs measure in the O'Connell-Yor polymer case, but is applicable for general potentials $V$. We recall in this section its definition, as well as some of its properties useful for our purposes.

For any bounded measurable function $F : \rr \to \rr$ supported in $[0, t]$, we define the pseudo-Gibbs expectation by,
\beq \label{eqn:psg-def}
\Eet_{N, t} [F] := \int_{0 < s_0 < \dots < s_{N-1} < t } \exp \left( - \sum_{j=0}^{N-1} \int_{s_j}^{s_{j+1} } V'' (u_j ( u) ) \d u \right) F(s_0)  \prod_{j=0}^{N-1} V'' (u_{j+1} (s_j ) ) \d s,
\eeq
where we abbreviated $u_j (t) = u_j ( t , \eta , \theta)$ above. The attribute ``pseudo" refers to the fact that, when $V(x)\neq \e^{-x}$, this measure is not associated in an obvious way to a natural polymer ensemble. (One could take \eqref{eqn:psg-def} to define a random ensemble of up-right paths in a random environment depending on the solutions $u_j$, but the significance of this interpretation is not clear to us. Additionally, the interpretation of the terms in the product on the right of \eqref{eqn:psg-def} as a weight of a polymer path in a traditional sense is unclear.) In any case, the main properties of this measure that we use are that it defines a non-negative measure on the simplex of jump times (so that we can use standard tools like Jensen's inequality), and do not use any real interpretation of it as a path ensemble.

From \cite[Proposition 6.1]{LNS} we have,
\bel
The assignment above defines a measure on the interval $[0, t]$ with total mass less than or equal to $1$.
\eel

In a similar fashion  to the proof of \cite[Lemma 7.2]{LNS} one can show that,
\beq\label{eqn:deriv-formula}
\del_\theta W_{N, t} ( \eta , \theta ) = \Eet_{N, t} [ ( s_0 )_+ ] ,
\eeq
linking the pseudo-Gibbs measure to the height function.

The following is \cite[Proposition 6.8]{LNS}. It gives a tail estimate for the random variable $s_0$ under the annealed pseudo-Gibbs measure. 
\bep
Let $I_0$ be a compact interval supported in $(0, \infty)$ on which $\psiV_2 ( \theta ) < 0$ for all $ \theta \in I_0$. Then there is a $\delta >0$ and $C_0 >0$ so that for all pairs $(t , \theta)$ satisfying $\theta \in I_0$ and
\[
|t-N \psiV_1 ( \theta ) | \leq \delta N
\]
we have for all $0 \leq w \leq \delta N$ that,
\[
\ee\left[ \Ett_{N, t} [ \1_{ \{ s_0 > \mff (\theta, t) + w \} } ] \right] \leq \exp \left( - \frac{ w^3}{16 N^2  \psiV_2 ( \theta)^2 } + C_0 N^{-3} w^4 \right)
\]
as long as $\mff ( \theta ,t ) + w \geq 0$.
\eep

From the above we easily conclude the following, in a similar fashion to the proof of Corollary \ref{cor:disc-tail-1}. 
\bec \label{cor:diff-jump}
Let $I_0$ and $\delta >0$ be as above. For all pairs $(t, \theta)$ satisfying $\theta \in I_0$ and $| \mff ( t , \theta ) | \leq \delta N$ we have for all $ a >0$ that,
\[
 \ee[ \Ett_{N, t} [ \e^{ a (s_0 )_+ } ] ] \leq C \e^{ C N a^{3/2}  + a \mff ( \theta , t )_+}.
\]
\eec

\section{Tails of diffusions} \label{sec:diff-tail}

The following proposition and corollary establishes upper and lower bounds for the moment generating function and tail of the height function $W_{N, t}^{\theta_0}$ in the special case of vanishing characteristic direction. We will later deduce the general case from this result and stationarity. 
\bep
Let $\theta_0$ satisfy $\psiV_2 ( \theta_0) < 0$.  Let $t_0 = N \psiV_1 (\theta_0)$. There is a $\delta >0$ so that for all $0 < a < \delta $ we have for some $c, C>0$ that
\beq \label{eqn:diff-lap-1}
c \e^{ c N a^{3} } \leq \ee \left[ \exp \left\{ a \left(  W_{N, t_0}^{\theta_0}  - \ee[ W_{N, t_0}^{\theta_0} ] \right) \right\} \right] \leq C \e^{ C N a^{3} }
\eeq
and
\beq \label{eqn:diff-lap-2}
\ee\left[ \exp \left\{ -a \left( W_{N, t_0}^{\theta_0}  - \ee[ W_{N, t_0}^{\theta_0} ] \right) \right\} \right] \leq C \e^{ C N a^3}
\eeq
\eep
\proof 
Let $\theta > \eta > 0 $. We have,
\[
W_{N, t} ( \theta , \theta ) = W_{N, t} ( \theta , \eta )  + \int_\eta^\theta ( \del_{u_2} W_{N, t} ) ( \theta, u_2 ) \vert_{u_2 = u } \d u. 
\]
By Proposition \ref{prop:diff-derivs} and \eqref{eqn:deriv-formula} we have,
\[
0 \leq \int_\eta^\theta ( \del_{u_2} W_{N, t} ) ( \theta, u_2 ) \vert_{u_2 = u }\d u \leq ( \theta - \eta ) \Ett_{N, t} [ (s_0)_+ ].
\]
In particular,
\beq \label{eqn:diff-lb}
\ee\left[ \exp ( ( \theta - \eta ) W_{N, t} ( \theta , \theta ) ) \right] \geq \ee\left[ \exp ( ( \theta - \eta ) W_{N, t} ( \theta , \eta ) ) \right]
\eeq
as well as,
\begin{align*}
&\ee\left[ \exp \left( \frac{1}{2}( \theta - \eta ) W_{N, t} ( \theta , \theta ) \right) \right]^2 \notag\\
\leq &~\ee\left[ \exp ( ( \theta - \eta ) W_{N, t} ( \theta , \eta ) ) \right] \ee\left[ \exp ( ( \theta - \eta)^2 \Ett[ (s_0 )_+ ]) \right] ,
\end{align*}
by Cauchy-Schwarz. 
By a variant of Jensen's inequality for sub-probability measures,
\[
\ee\left[ \exp ( ( \theta - \eta)^2 \Ett[ (s_0 )_+ ] \right] \leq 1 +\ee[ \Ett_{N, t} [ \e^{ (\theta - \eta)^2 (s_0 )_+ } ] ] .
\]
We specialize now to $\theta = \theta_0$, $\eta = \theta_0 -a$ and $t = t_0$. 
The parameters $t_0$ and $\theta_0$ in the statement of the Proposition are chosen so that $ \mff ( \theta_0 , t_0 ) = 0$. It follows then from Corollary \ref{cor:diff-jump} that,
\[
\ee \left[ E^{ ( \theta_0 \theta_0 ) }_{N, t_0} [ \e^{ a^2 (s_0 )_+ } ] \right]  \leq C \e^{ C Na^3}.
\]
Moreover, from Corollary \ref{cor:diff-generat} we have,
\[
\ee\left[ \exp ( ( \theta_0 - \eta ) W_{N, t_0} ( \theta_0 , \eta ) ) \right] \leq \exp\left( a \ee[ W_{N, t} ( \theta_0, \theta_0 ) ] + c N a^3 \right)
\]
by taking $\delta$ sufficiently small.  This proves the upper bound of \eqref{eqn:diff-lap-1}. The lower bound of \eqref{eqn:diff-lap-2} follows from \eqref{eqn:diff-lb} with $\theta = \theta_0$ and $\eta = \theta_0 - a$ and Corollary \ref{cor:diff-generat}.

For \eqref{eqn:diff-lap-2} we start with the inequality, for $\eta > \theta_0$,
\begin{align*}
W_{N, t_0} ( \theta_0 , \eta ) &= W_{N, t_0} ( \theta_0 , \theta_0 ) + \int_{\theta_0}^\eta ( \del_{u_2}  W_{N, t_0}  ) ( \theta_0  ,u_2 ) \vert_{u_2 = u} \,\mathrm{d}u \notag\\
&\leq  W_{N, t_0} ( \theta_0 , \theta_0 ) + ( \theta_0 - \eta ) E^{( \eta , \eta ) }_{N, t_0} [ (s_0 )_+],
\end{align*}
with the inequality following from Proposition \ref{prop:diff-derivs} and \eqref{eqn:deriv-formula}. We then have,
\begin{align*}
 & \ee\left[ \exp \left( \frac{ \theta_0 - \eta}{2} W_{N, t_0} ( \theta_0 , \theta_0) \right) \right]^2 \notag\\
\leq & \ee \left[ \exp \left( ( \theta_0 - \eta ) W_{N, t_0} ( \theta_0 , \eta) \right) \right] \ee \left[ \exp \left( (\theta_0 - \eta)^2 E^{ ( \eta , \eta ) }_{N, t_0} [ (s_0)_+ ] \right) \right]
\end{align*}
Now since $|\mff ( \eta , t_0) | \leq C N (\eta - \theta_0)$ we see that, arguing as above,
\beq
 \ee \left[ \exp \left( (\theta_0 - \eta)^2 E^{ ( \eta , \eta ) }_{N, t_0} [ (s_0)_+ ] \right) \right] \leq C \e^{ C N a^3 }.
\eeq
The claim now follows similarly to the argument above.
 \qed
 
 We deduce the following estimates on the tail from the estimates on the moment generating function via the routine Proposition \ref{prop:a-tail}.
 
 \bec \label{cor:tail-vanish}
 Let $\theta_0 >0$ be a point such that $\psiV_2 ( \theta_0 ) < 0$ and let $t_0 = N \psiV_2 ( \theta_0)$. There is a $\delta >0$ and $c, C>0$ so that for all $0 < u < \delta N$ we have,
 \[
 \pp\left[ |W_{N, \theta_0}^{t_0} - \mathbb{E}[W_{N, \theta_0}^{t_0}] | > u \right] \leq C \e^{ - c u^{3/2} N^{-1/2} }
 \]
 and
 \[
  \pp\left[ W_{N, \theta_0}^{t_0} - \mathbb{E}[W_{N, \theta_0}^{t_0}]  > u \right] > c \e^{- C u^{3/2} N^{-1/2} }.
 \]
 
 \eec

 \subsection{Proof of Theorem \ref{thm:main-diff}}
 
 Let $\theta_0 >0$ be as in the statement of the Theorem. Let $t_0 := N \psiV_1 ( \theta_0 )$. First assume that $ t > t_0$. Let
 \[
 W_{N, t}^{ \theta_0 } = \left( W_{N, t}^{ \theta_0 } + B_0 (t-t_0) - \theta ( t -t_0) \right) - B_{t-t_0} +\theta ( t -t_0) =: X + Y + \theta ( t -t_0).
 \]
 Then $X$ has the same distribution as $W_{N, t_0}^{\theta_0}$ and $Y$ is a Gaussian with variance $t-t_0 = | \mff ( \theta_0, t)|$. Therefore by Corollary \ref{cor:tail-vanish} we have
 \[
 \pp\left[ \left|( X - \ee[X] ) + Y \right| > u \right] \leq C \left( \e^{ - c u^{3/2} N^{-1/2} } + \e^{ - u^2 | \mff ( t , \theta ) |^{-1} } \right) .
 \]
For lower tail estimates we instead use,
\[
 \pp\left[ (X - \ee[X] ) + Y > u \right] \geq c  \e^{ - C u^{3/2} N^{-1/2} } -C \e^{ -c u^2 | \mff ( t , \theta ) |^{-1} }  .
\]
 The above estimates suffice to prove the theorem in the case $t > t_0$. The case $t < t_0$ is handled by interchanging the roles of $t$ and $t_0$ in the argument above. \qed

 \appendix
 \section{Tail estimates from moment generating functions}  \label{a:tail}
 \bep \label{prop:a-tail}
 Let $X$ be a random variable, and constants $c, C, \delta >0$ and $N \geq 1$ such that the estimates,
 \beq \label{eqn:a-1}
 c \e^{ c  N a^3} \leq \ee\left[ \e^{ a X} \right] \leq C \e^{ C N a^3}
 \eeq
 hold for all $a \leq \delta N$. Then there are constants $c', C'>0$ and $\delta' >0$ depending only on $c, C, \delta$ so that
 \beq \label{eqn:a-2}
 c' \e^{ - C' u^{3/2} N^{-1/2} } \leq \pp \left[ X > u \right] \leq C' \e^{ - c' u^{3/2} N^{-1/2} }
 \eeq
 for $0 < u < \delta' N$. If only the upper bound holds in \eqref{eqn:a-1} then the upper bound still holds in \eqref{eqn:a-2}.
 \eep
 \proof The upper bounds follows from Markov's inequality. For the lower bound, let $u_0 >0$. Then let $\delta > 2 a >0$ and $0 < u_0 < u_1$. Then,
 \begin{align*}
 c \e^{ c N a^3} \leq \ee[ \e^{a X} ] \leq \e^{ a u_1} \pp[ X > u_0] + \e^{ a u_0} + \ee[ \e^{2 a X} ]^{1/2} \pp[ X  > u_1 ]^{1/2}
 \end{align*}
 Choosing $a = C' (u_0 / N)^{1/2}$ for some large $C' >0$ (and assuming $u_0$ sufficiently small so that the requirement $2a < \delta$ is respected) we see that
 \[
 c \e^{ c N a^3} \leq \ee[ \e^{a X} ] \leq \e^{ a u_1} \pp[ X > u_0]+ \ee[ \e^{2 a X} ]^{1/2} \pp[ X  > u_1 ]^{1/2}
 \]
 for some new $c>0$.  By our upper bounds and choice of $a$,
 \[
 \ee[ \e^{2 a X} ]^{1/2} \pp[ X  > u_1 ]^{1/2} \leq C \e^{ C C'^3 u_0^{3/2} N^{-1/2}} \e^{ - c u_1^{3/2} N^{-1/2}}.
 \]
 Taking $u_1 = C'' u_0$ for some large $C'' >0$ we see that,
 \[
 c \e^{ c N a^3} \leq \e^{ a u_1} \pp [ X > u_0].
 \]
 This yields the claim. \qed

\end{document}